\newtheorem{theorem}{Theorem}
\newtheorem{proposition}[theorem]{Proposition}
\newtheorem{conjecture}[theorem]{Conjecture}
\newtheorem{lemma}[theorem]{Lemma}
\newenvironment{proof}[1][Proof]{\textbf{#1.} }{\ \rule{0.5em}{0.5em}}
\begin{document}

\title{Sampling low-spectrum signals on graphs via cluster-concentrated modes: examples.}
\author{Jeffrey A. Hogan\\
School of Mathematical and Physical Sciences\\
University of Newcastle\\
Callaghan NSW 2308 Australia\\
email: {\tt jeff.hogan@newcastle.edu.au} \and
Joseph D. Lakey\\
Department of Mathematical Sciences\\
New Mexico State University\\
Las Cruces, NM 88003--8001\\
email: {\tt jlakey@nmsu.edu}}
\abstract{We establish frame inequalities for signals in Paley--Wiener spaces on two specific families of graphs consisting of combinations of cubes and cycles. The frame elements are localizations to cubes, regarded as clusters in the graphs, of vertex functions that are eigenvectors of certain spatio--spectral limiting operators on graph signals.}
\smallskip\noindent
{\bf keywords:\,}{Graph product, Dirichlet eigenvector, 
 graph Fourier transform
}

\smallskip\noindent
{\bf AMS subject classifications:\,}{94A12, 94A20, 42C10, 65T99} 
\maketitle

\maketitle


\section{Introduction}
 We investigate here,
  in the context of signals on graphs,
 extensions of two specific sampling methods that were mentioned in \cite{higgins_cardinal_1985}, namely
 generalized sampling of bandlimited signals (loc. cit. p.~68)
cf., \cite{papoulis-gensamp,papoulis_1968,brown_1981},
and time and band limiting (loc. cit. p. 65), also known as spatio--spectral limiting in geometric contexts.  The former is now well established in the context of sampling
spectrum-limited signals on graphs while the latter has also been considered in relation to sampling on graphs \cite{tsitsvero_etal_2015}, cf., \cite{Hogan2018,hogan2018spatiospectral}.

We study inner products with eigenvectors of spatio--spectral limiting as generalized sampling measurements for two very specific families of finite graphs
 involving combinations of cubes and cycles, in which the cubes are regarded as \emph{clusters}, 
 and the cycle is regarded as a skeleton that connects the clusters. 
Of particular interest is how sampling inequalities---frame inequalities of the form 
$A\|f\|^2\leq \sum |\langle f,\,\psi_j\rangle|^2\leq B\|f\|^2$ where,  for fixed $\{\psi_j\}$, $\langle f,\,\psi_j\rangle$
are regarded as measurements or generalized samples of vertex functions $f$ in the span of low--spectrum eigenvalues of the 
graph Laplacian---depend on how the cubes and cycles are combined.  This study is motivated by recent work of I. and M. Pesenson \cite{pesenson_2021} who established sampling inequalities on graphs involving a single sampling measurement per cluster. Our study seeks corresponding inequalities involving a number of measurements per cluster according to the number of spectral modes that can be concentrated in each cluster. The reason to study the specific families of graphs examined here is that these graphs
allow an analytic apparatus to estimate or count the  number of such modes explicitly.
Here is an outline of the contents. 

In Sect.  \ref{sec:background} we  provide background,
briefly reviewing generalized sampling in Paley--Wiener spaces on $\mathbb{R}$ in Sect.~\ref{ssec:gensamp},
 and time and band limiting and the role of prolate functions in generalized sampling in Sect.~\ref{ssec:tbl}.
Section \ref{ssec:pesenson} reviews basic concepts of spectral graph theory and a  recent result of  I. and M. Pesenson concerning cluster-wise sampling on graphs.  We review particular properties of cubes and cycle graphs in  Sect.~\ref{ssec:cycle_cube}.

Section \ref{sec:vertexsub} considers certain graphs that we denote by $\mathcal{B}_N\vdash \mathcal{C}_m$ 
 in which a cube $\mathcal{B}_N$ is substituted for each vertex on a cycle $ \mathcal{C}_m$.  Each inserted cube is regarded as a cluster
and 
is connected to two other cubes, each by a single
edge, to form a cycle of clusters.
In Sect.~\ref{ssec:vertexsub}, eigenvectors (of the graph Laplacians) of these graphs are identified as having either \emph{Dirichlet type}, supported on a single copy of $\mathcal{B}_N$, or \emph{Neumann-type}. The latter are eigenvectors of a certain 
\emph{augmented Laplacian} on $\mathcal{B}_N$ modulated by eigenvectors of the cycle.   The structure of the eigenvectors of this graph
is summarized in Thm.~\ref{thm:rtimes_eigenvectors}.
Section \ref{ssec:ssl_vertexsub} discusses a certain spatial-limiting operator $Q$ that truncates a vertex function $f$ to
a single $\mathcal{B}_N$-cluster and a spectral-limiting operator $P$ that projects onto the span of the low-eigenvalue eigenvectors of the Laplacian on  $\mathcal{B}_N\vdash \mathcal{C}_m$. 
Conjecture \ref{conj:pq-eigenvectors} describes the eigenvectors of $PQ$ and proposes a complete set of eigenvectors of 
shifted $PQ$-operators whose eigenvalues should exceed $1/2$, meaning that the corresponding low-spectrum 
eigenvectors are concentrated in a single copy of $\mathcal{B}_N$.
 Section \ref{ssec:specific_rtimes} provides a specific example fixing  the cube parameter $N=7$ and cycle parameter $m=21$. 
 
 Section \ref{sec:cartesian} considers Cartesian products $\mathcal{B}_N\square\,\, \mathcal{C}_m$ of cubes and cycles.   
 Here, each $\mathcal{B}_N$-slice (fixed cycle coordinate)  can be regarded as a
 cluster but, in  contrast to $\mathcal{B}_N\vdash \mathcal{C}_m$, each vertex in a cluster is now adjacent to a unique corresponding vertex in each of the two adjacent
 clusters.  
 Theorem \ref{thm:bnsquarecm} describes orthonormal bases (ONBs) for decompositions of Paley--Wiener spaces on 
 these Cartesian products in terms of components localized on a single cubic slice,  versus components that are distributed over all cubic slices. Section \ref{ssec:N7m21_cartesian} provides a specific example fixing  the same cube parameters 
 $N=7$ and cycle parameter $m=21$ for the Cartesian case.

  Finally,  Sect.~\ref{sec:conclusion}  discusses how our results for specific graph families  might
  extend to other finite graphs.     The Appendix 
  establishes a condition for generalized sampling in the context of spatial and spectral
 limiting on (standard Cayley graphs of) finite abelian groups, which include $\mathcal{B}_N\square\,\, \mathcal{C}_m$.
 The observation is not central to the main theme here, but it provides a general context for an important aspect of the examples presented here.
 Throughout, notation will be introduced as needed to define quantities that use the notation.

\section{Background\label{sec:background}}

\subsection{Generalized sampling \label{ssec:gensamp}}
Generalized sampling  on $\mathbb{R}$ in the sense of Papoulis \cite{papoulis-gensamp}  refers to sampling formulas of the form
\begin{equation}\label{eq:gensamp} f(t)=\sum_{\nu=1}^M \sum_{k=-\infty}^\infty z_\nu(k T) p_\nu(t-k T), \quad f\in {\rm PW}_\Omega(\mathbb{R})
\end{equation}
where $\widehat{z}_\nu(\xi)= \widehat{f}(\xi) \widehat{h}_\nu(\xi)$, which implies that
$z_\nu(k T)= f\ast h(k T)=\langle f,\, h(\cdot -k T)\rangle$
 for real filter functions $h_\nu$, $\nu=1,\dots, M$.
The uniform recovery condition developed in Brown 
\cite{brown_1981} is a uniform lower bound on the determinant of the matrix $A_{\nu \ell}(\omega)=\widehat{h}_\nu(\omega+\ell\sigma)$,
$\nu=1,\dots, M$; $\ell=0,\dots, M-1$
on the interval $-\Omega\leq \xi\leq -\Omega+\sigma$ where $\sigma=2\Omega/M$. In this event 
the reconstruction functions $p_\nu(t)$ have the form $p_\nu(t)=\sum_{\ell=0}^{M-1} \int_{-\Omega}^{\sigma-\Omega}b_{\nu \ell}(\xi) \, e^{2\pi i (\xi +\ell \sigma) t}\, d\xi$ where $b_{\nu \ell}$ is the $(\nu,\ell)$th element of the inverse of the matrix $A$.   
A critical part of the
argument is that the invertibility condition allows uniform reconstruction of the exponential $e^{2\pi i t\xi}$
on  $(-\Omega, \sigma-\Omega)$, then on $(-\Omega,\Omega)$,  in the form
\begin{equation} \label{eq:exp_gensamp} e^{2\pi i t\xi}=\sum_{\nu=1}^M \widehat{h}_\nu(\xi) \sum_{n=-\infty}^\infty y_\nu(t-kT) e^{2\pi i kT\xi};\quad
\widehat{y}_\nu(\xi,t)=\sum_{\ell=0}^{M-1} e^{2\pi i k\sigma t} b_{\nu \ell}(\xi) 
\end{equation}
which is a refinement of the original approach of Papoulis.

\subsection{Time and band limiting, prolate functions, and prolate shift frames\label{ssec:tbl}}
A particular case of generalized sampling arises when the filter functions $h_\nu$ above are eigenfunctions of time- and band-limiting operators.
The bandlimiting operator $P_\Omega$ projects onto the Paley--Wiener space ${\rm PW}_\Omega=\{f\in L^2(\mathbb{R}): \widehat{f}(\xi)=0,\,\, |\xi|>\Omega\}$ where $\hat{f}(\xi)=\int_{-\infty}^\infty f(t)\, e^{-2\pi i t\xi}\, dt$ when $f\in L^1\cap L^2(\mathbb{R})$. 
The time-limiting  operator is $(Q_Tf)(t)=f(t)$ if $|t|\leq T$ and $(Q_Tf)(t)=0$ if $|t|>T$. Up to a dilation factor, the 
eigenfunctions of the operator $P_\Omega Q_T$ are the \emph{prolate spheroidal wave functions} (PSWFs) $\psi_\nu$ 
with eigenvalues $1>\lambda_0>\lambda_1>\cdots$, e.g., \cite{hogan_lakey_tbl}.  The eigenvalues of $P_\Omega Q_T$ are the same as the concentrations 
of $L^2$-norm  on $[-T,T]$, that is,
$\|Q_T\psi\|^2/\|\psi\|^2$.  The famous $2\Omega T$ theorem \cite{landau_widom} states that the number of eigenvalues of $P_\Omega Q_{T/2}$ that are \emph{close to one}  is approximately $2\Omega T-O(\log(2\Omega T))$ for large enough values of $\Omega T$.
This observation is tied to the \emph{spectral accumulation property} of prolates,
namely that  $\sum_{\nu=0}^\infty\lambda_\nu |\widehat{\psi}_\nu (\xi)|^2=1$ on $[-\Omega,\Omega]$ (and vanishes for $|\xi|>\Omega$). Another useful property of prolates is that 
they are orthogonal on $[-T,T]$: $\langle Q_T\psi_n,\, \psi_m\rangle =\lambda_n\delta_{n,m}$.

\begin{theorem}\label{thm:prolate_riesz_basis}
Let $\psi_\nu$ denote the $\nu$th prolate bandlimited to $[-\Omega,\, \Omega]$ and time-concentrated in $[-1,1]$.
If $N=2\Omega \alpha\in\mathbb{N}$ then the functions 
$\{\psi_\nu(\cdot -\alpha k)\}_{\nu=0,k\in\mathbb{Z}}^{N-1}$ form a Riesz basis for ${\rm PW}_{\Omega}$. 
That is, there exist constants $0<A\leq B<\infty$
such that for any sequence $\{c_{\nu k}\}_{\nu=0,\, k\in\mathbb{Z}}^{N-1}\in \ell^2( \mathbb{Z}^N)$ one has
\[A\sum_{\nu k} |c_{\nu k}|^2\leq  \|\sum_{\nu k} c_{\nu k}\,\psi_\nu (\cdot-\alpha k)\|^2\leq B\sum_{\nu k} |c_{\nu k}|^2\, .
\]
If $N\geq \lceil 2\Omega \alpha\rceil$ then the functions 
$\{\psi_\nu (\cdot -\alpha k)\}_{\nu=0,k\in\mathbb{Z}}^{N-1}$ form a frame for ${\rm PW}_{\Omega}$: there exist constants $0<A\leq B<\infty$
such that for any $f\in {\rm PW}_{\Omega}$ one has
\[A\|f\|_{L^2}^2\leq  \sum_{\nu=0}^{N-1} \sum_{k\in\mathbb{Z}} |\langle f,\, \psi_\nu(\cdot-\alpha k)\rangle|^2\leq B\|f\|_{L^2}^2\, .
\]
If $N< \lceil 2\Omega \alpha\rceil$ then the lower frame bound fails.
\end{theorem}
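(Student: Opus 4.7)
The plan is to pass to the Fourier side and carry out a fiberization of the shift-invariant system, reducing the Riesz-basis/frame inequalities to pointwise singular-value bounds on a matrix-valued function, and then analyze that matrix using the structure of the PSWFs. For $f\in\mathrm{PW}_\Omega$, write $\langle f,\psi_\nu(\cdot-\alpha k)\rangle = \int_{-\Omega}^{\Omega}\hat f(\xi)\overline{\hat\psi_\nu(\xi)}\,e^{2\pi i\alpha k\xi}\,d\xi$, which is a Fourier coefficient on the period $\sigma=1/\alpha$. Partition $[-\Omega,\Omega]$ into $L=\lceil 2\Omega\alpha\rceil$ consecutive subintervals of length $\sigma$, set $F_\ell(\eta)=\hat f(\eta-\Omega+\ell\sigma)$ and $\Psi_{\nu,\ell}(\eta)=\hat\psi_\nu(\eta-\Omega+\ell\sigma)$ for $\eta\in[0,\sigma]$ (with values zero outside the natural supports), and apply Parseval on $[0,\sigma]$ to obtain
\[\sum_{\nu,k}|\langle f,\psi_\nu(\cdot-\alpha k)\rangle|^2 = \sigma\int_0^\sigma \|H(\eta)\mathbf F(\eta)\|^2\,d\eta,\]
where $H(\eta)=(\overline{\Psi_{\nu,\ell}(\eta)})_{\nu,\ell}$ is an $N\times L$ matrix and $\mathbf F(\eta)=(F_\ell(\eta))_\ell\in\mathbb C^L$. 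The folding identity $\|f\|^2=\int_0^\sigma\|\mathbf F(\eta)\|^2\,d\eta$ then reduces the frame inequalities to essentially uniform bounds $A/\sigma\le\sigma_{\min}(H(\eta))^2$ and $\sigma_{\max}(H(\eta))^2\le B/\sigma$ on the extremal singular values of $H(\eta)$.

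The upper frame bound is routine: each $\hat\psi_\nu$ is bounded on $[-\Omega,\Omega]$ and $H(\eta)$ has at most $NL$ nonzero entries. The failure of the lower bound when $N<\lceil 2\Omega\alpha\rceil$ is also immediate: in that case $H(\eta)$ has strictly more columns than rows, so $H(\eta)^*H(\eta)$ is a singular $L\times L$ matrix at every $\eta$. Choosing unit vectors $\mathbf F(\eta)\in\ker H(\eta)$ on a subset of $[0,\sigma]$ of positive measure and unfolding produces a nonzero $f\in\mathrm{PW}_\Omega$ with all measurements vanishing, ruling out a positive lower frame bound.

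The heart of the proof is therefore the uniform lower bound in the Riesz and frame cases. In the critical case $N=L=2\Omega\alpha$, $H(\eta)$ is square and continuous in $\eta$ on the compact interval $[0,\sigma]$, so continuity and compactness reduce uniform invertibility to pointwise nonvanishing of $\det H(\eta)$. A vanishing determinant at some $\eta^*$ corresponds to a nontrivial combination $g=\sum_{\nu=0}^{N-1}c_\nu\hat\psi_\nu$ that vanishes simultaneously at the $N$ equally spaced nodes $\{\eta^*+\ell\sigma-\Omega\}_{\ell=0}^{N-1}\subset[-\Omega,\Omega]$. I would exploit that each $\hat\psi_\nu|_{[-\Omega,\Omega]}$ agrees with the entire function of exponential type $2\pi$ given by $\lambda_\nu^{-1}\int_{-1}^{1}\psi_\nu(t)e^{-2\pi i\xi t}\,dt$, so $g$ extends to an element of $\mathrm{PW}_1$ (in the spectral variable) lying in the $N$-dimensional prolate subspace. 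The required nonvanishing then becomes a unisolvence statement for this subspace at $N$ Nyquist-density nodes. Establishing this unisolvence is the main technical hurdle; it is here that the Slepian orthogonality $\langle Q_1\psi_\nu,\psi_\mu\rangle=\lambda_\nu\delta_{\nu\mu}$ and the accumulation-of-spectrum heuristic (the first $N\approx 2\Omega T$ prolate eigenvalues lie near $1$) should enter.

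For the oversampled frame case $N>\lceil 2\Omega\alpha\rceil$, $H(\eta)$ has more rows than columns, so one needs only full column rank $L$ with a uniform lower bound on the smallest nonzero singular value. It suffices to exhibit, for each $\eta$, some $L\times L$ submatrix of $H(\eta)$ whose determinant is bounded away from zero uniformly; the same fiber-nonvanishing argument as in the critical case, applied to a suitable subsystem of $L$ of the first $N$ prolates and to the portion of $[0,\sigma]$ on which the selected columns are fully supported inside $[-\Omega,\Omega]$, is expected to yield this. The upper bound is argued verbatim as in the critical case.
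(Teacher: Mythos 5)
Your fiberization is exactly the route the paper takes: the matrix $H(\eta)$ you construct is Brown's matrix $A_{\nu\ell}(\omega)=\widehat{h}_\nu(\omega+\ell\sigma)$ from Sect.~\ref{ssec:gensamp}, and the paper's entire proof of Thm.~\ref{thm:prolate_riesz_basis} consists of the remark that the result ``follows \dots by verifying the matrix invertibility criterion for generalized sampling,'' with the verification deferred to \cite{hogan_lakey_2015}. Your reduction of the frame and Riesz inequalities to essentially uniform singular-value bounds on $H(\eta)$, your upper bound, and your kernel argument for the failure of the lower bound when $N<\lceil 2\Omega\alpha\rceil$ are correct and are in fact more detailed than anything written in the paper.

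As a standalone proof, however, the proposal has a genuine gap at exactly the point you flag: you never establish that $\det H(\eta)$ is bounded away from zero, i.e.\ that no nontrivial combination $g=\sum_{\nu}c_\nu\widehat{\psi}_\nu$ vanishes at the $N$ nodes $\eta^*-\Omega+\ell\sigma$. This is not a soft fact. The extension of $g$ to a function with spectrum in $[-1,1]$ (via $P_\Omega Q_1\psi_\nu=\lambda_\nu\psi_\nu$) is correct, but a nonzero element of that Paley--Wiener class can vanish on any finite set of points, so nothing is gained from bandlimitedness alone; the unisolvence must exploit the specific $N$-dimensional prolate span. Neither of the two properties you invoke closes this: the Slepian orthogonality $\langle Q_1\psi_\nu,\psi_\mu\rangle=\lambda_\nu\delta_{\nu\mu}$ concerns the time side, and the spectral-accumulation identity $\sum_\nu\lambda_\nu|\widehat{\psi}_\nu(\xi)|^2=1$ controls each column of $H(\eta)$ pointwise (showing no column is small) but says nothing about joint linear independence of the $L$ shifted columns. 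So the decisive lemma is named but not proved, and the oversampled case inherits the same gap since your submatrix argument rests on it. To be fair, the paper is in the same position---it asserts the invertibility criterion and points to \cite{hogan_lakey_2015}---so your proposal faithfully reconstructs the paper's argument up to, but not past, the step the paper itself outsources.
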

The Riesz basis case follows along lines similar to the generalized sampling case as in (\ref{eq:exp_gensamp}),
specifically by verifying the matrix invertibility criterion for generalized sampling.
It is observed further in \cite{hogan_lakey_2015} that the frame or Riesz basis bounds are most snug when $N\approx 4\Omega$,
or $\alpha\approx 2$ so the number of sampling functions used is approximately equal to the time--bandwidth product.
In this event, $\sum_{\nu=0}^{N-1} |\psi_\nu(\xi)|^2$ is smoothly bounded away from zero on $[-\Omega,\,\Omega]$
and the series approximation of $e^{2\pi i t\xi}$ by shifted prolates along the lines of (\ref{eq:exp_gensamp})  is numerically effective.

\subsection{Generalized sampling in Paley--Wiener spaces on graphs and clusters\label{ssec:pesenson}}

 \paragraph{Graphs and their Fourier transforms}
 Let $\mathcal{G}$ be a simple unweighted, undirected connected finite graph with $n$ vertices $V(\mathcal{G})$ and edges $E(\mathcal{G})\subset V(\mathcal{G})\times V(\mathcal{G})$. Given an ordering 
 $v_1,\dots, v_{n}$ of $V(\mathcal{G})$, the adjacency matrix $A$ has $(i,j)$-entry equal to one if $(v_i,v_j)\in E(\mathcal{G})$ and equal to zero otherwise.   We write $v\sim w$ if $(v,w)\in E(\mathcal{G})$. 
 When $\mathcal{G}$ is fixed we will simply write $V$ or $E$ for $V(\mathcal{G})$ and $E(\mathcal{G})$ (and similarly for other quantities that refer to the graph $\mathcal{G}$). The space of functions $f:V(\mathcal{G})\to\mathbb{C}$ is denoted $\ell^2(\mathcal{G})$.
 When referring to a graph Laplacian on $\mathcal{G}$ we always mean the unnormalized 
 Laplacian operator $L(\mathcal{G})$ 
 that maps a vertex function $f\in\ell^2(\mathcal{G})$ to
 $(Lf)(v)=\sum_{w\sim v} [f(v)-f(w)]$. 
  If $D$ is the diagonal degree matrix with $i$th diagonal entry $D_{ii}=\#\{w: (v_i,w)\in E\}$
(called the degree ${\rm deg}\, v_i$ of $v_i$) then $L=D-A$.  It is standard to refer to the eigenvalues of $L(\mathcal{G})$ as the eigenvalues or \emph{spectrum} of $\mathcal{G}$.  The operator $L$ is nonnegative and symmetric. It has a single eigenvalue zero (with constant eigenvector) and norm at most $\max\{{\rm deg}\, v\}_{v\in V}$.  We may write the eigenvalues of $\mathcal{G}$ as $0=\lambda_0(\mathcal{G})<\lambda_1(\mathcal{G})\leq \cdots\leq \lambda_n(\mathcal{G})$.
The graph Fourier transform $F(\mathcal{G})$ is represented by the unitary matrix whose columns are the corresponding unit-normalized eigenvectors of $L$.   We assume that the columns are ordered by increasing Laplacian eigenvalue.
Given $\Omega>0$ the space ${\rm PW}_\Omega$ is defined as the span of the Laplacian eigenvectors having eigenvalue
at most $\Omega$.  If $F_\Omega$ is the matrix consisting of the columns of $F(\mathcal{G})$ with Laplacian eigenvalue at most $\Omega$ then the orthogonal projection onto ${\rm PW}_\Omega$ 
is expressed by the matrix $F_\Omega F_\Omega^\ast$.

 \paragraph{A cluster sampling inequality of I. and M. Pesenson}
In \cite{pesenson_2021},  I.~and M.~Pesenson consider the problem of sampling and interpolation of low-spectrum information associated with 
clusters in undirected weighted graphs.  Some notation is needed to describe a version of the result of  \cite{pesenson_2021} of greatest relevance here.
Let $\mathcal{G}$ be a connected finite unweighted graph and $\mathcal{S}=\{S_j\}_{j=1}^m$
a partition of $V(\mathcal{G})$. Let $L_j$ be the  Laplace operator of the induced graph $\mathcal{G}_j$ on $S_j$ with first nonzero eigenvalue $\lambda_{1,j}$ and $\varphi_{0,j}$ its zero eigenfunction. Assume for each $j$ a \emph{sampling function} $\psi_j\in\ell^2(\mathcal{G})$ is supported in $S_j$ 
and satisfies $\langle \psi_j,\, \varphi_{0,j})\neq 0$.  We assume that $\|\psi_j\|=1$ and set $\theta_j=1/{|\langle \varphi_{0,j},\psi_j\rangle|^2}$, which is generally larger than one (and equal to one precisely when $\psi_j$ is equal to $\varphi_{0,j}$).
The Pesensons set $\Lambda=\inf_j \lambda_{1,j}$ and $\Theta=\sup_{j} \theta_j$ 
when the partition $\{S_j\}$ and  sampling vectors $\{\psi_j\}$ are fixed.
As a consequence of certain Poincar\'e inequalities, I. and M. Pesenson proved a result that
we state as follows for the case considered here. 

\begin{theorem}\label{thm:pesenson2} For every $f\in {\rm PW}_\Omega$, where $0<\Omega<\Lambda/\Theta$,
the Plancherel--Polya inequalities
\begin{equation}\label{eq:graph_pp} 
\frac{(1-\mu)\epsilon}{(1+\epsilon) \Theta}\|f\|^2 \leq \sum_{j=1}^m |\langle f,\psi_j\rangle|^2\leq C\|f\|^2
\end{equation}
hold for a fixed $C>0$ provided $\mu=(1+\epsilon)\frac{\Theta}{\Lambda}\Omega <1$.
\end{theorem}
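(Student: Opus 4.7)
The plan is to verify the upper bound by Cauchy--Schwarz per cluster, use the cluster spectral gaps to control the non-constant part of $f$ on each $S_j$ via a Poincar\'e inequality, and then combine a cluster-wise decomposition of each measurement $\langle f,\psi_j\rangle$ with an elementary $(a+b)^2$-inequality.

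For the upper bound, because $\{S_j\}$ partitions the vertex set and each $\psi_j$ is a unit vector supported on $S_j$, Cauchy--Schwarz gives $|\langle f,\psi_j\rangle|\le\|f|_{S_j}\|$, whence $\sum_j|\langle f,\psi_j\rangle|^2\le\|f\|^2$ and $C=1$ suffices. For the lower bound, write $c_j=\langle f,\varphi_{0,j}\rangle$ and $g_j=f|_{S_j}-c_j\varphi_{0,j}$, so $g_j\perp\varphi_{0,j}$ in $\ell^2(S_j)$. The spectral gap of the induced Laplacian $L_j$ furnishes the Poincar\'e inequality $\|g_j\|^2\le\lambda_{1,j}^{-1}\langle L_jf|_{S_j},f|_{S_j}\rangle$; summing over $j$, using that $\sum_j\langle L_jf|_{S_j},f|_{S_j}\rangle\le\langle Lf,f\rangle$ (the right side, $\sum_{\{u,v\}\in E}|f(u)-f(v)|^2$, additionally includes nonnegative inter-cluster edge contributions) together with the bandlimit bound $\langle Lf,f\rangle\le\Omega\|f\|^2$, one obtains
\[\sum_{j=1}^m\|g_j\|^2\le\frac{\Omega}{\Lambda}\|f\|^2,\qquad \sum_{j=1}^m|c_j|^2\ge\Bigl(1-\frac{\Omega}{\Lambda}\Bigr)\|f\|^2.\]

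To pass to a bound on $\sum_j|\langle f,\psi_j\rangle|^2$, decompose $\psi_j=\langle\psi_j,\varphi_{0,j}\rangle\varphi_{0,j}+\tilde\psi_j$ with $\tilde\psi_j\perp\varphi_{0,j}$, so that $\langle f,\psi_j\rangle=c_j\,\overline{\langle\psi_j,\varphi_{0,j}\rangle}+\langle g_j,\tilde\psi_j\rangle$, where $|\langle\psi_j,\varphi_{0,j}\rangle|^2=1/\theta_j\ge 1/\Theta$ and $|\langle g_j,\tilde\psi_j\rangle|\le\|g_j\|$. The elementary inequality $|a+b|^2\ge|a|^2/(1+\epsilon)-|b|^2/\epsilon$ (a consequence of $|a-(1+\epsilon)b|^2\ge 0$) applied cluster by cluster and summed yields
\[\sum_{j=1}^m|\langle f,\psi_j\rangle|^2\ge\frac{1}{(1+\epsilon)\Theta}\sum_{j=1}^m|c_j|^2-\frac{1}{\epsilon}\sum_{j=1}^m\|g_j\|^2.\]
Substituting the two Poincar\'e consequences and reorganizing using $\mu=(1+\epsilon)\Theta\Omega/\Lambda$ then collapses the coefficient into the advertised $(1-\mu)\epsilon/((1+\epsilon)\Theta)$. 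The delicate step is precisely this final algebraic one: the $(a+b)^2$-parameter must be coordinated with the substitution of the two Poincar\'e bounds so that the resulting constant lands exactly in the claimed form rather than acquiring a slack $1/\Theta$ factor; by contrast, the spectral-gap Poincar\'e inequality on each $\mathcal{G}_j$ and the inter-cluster monotonicity of the Dirichlet form are both routine ingredients.
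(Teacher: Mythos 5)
The paper does not actually prove Theorem~\ref{thm:pesenson2}; it quotes it from \cite{pesenson_2021}, so your attempt has to be measured against the intended argument. Your ingredients are the right ones (Cauchy--Schwarz for the upper bound, the cluster decomposition $f|_{S_j}=c_j\varphi_{0,j}+g_j$, the spectral-gap Poincar\'e inequality combined with $\sum_j\langle L_jf|_{S_j},f|_{S_j}\rangle\le\langle Lf,f\rangle\le\Omega\|f\|^2$, and the splitting $\langle f,\psi_j\rangle=c_j\overline{\langle\psi_j,\varphi_{0,j}\rangle}+\langle g_j,\tilde\psi_j\rangle$), but the final step you yourself flag as delicate is where the argument genuinely breaks. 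The chain
\[\sum_{j=1}^m|\langle f,\psi_j\rangle|^2\ \ge\ \frac{1}{(1+\epsilon)\Theta}\Bigl(1-\frac{\Omega}{\Lambda}\Bigr)\|f\|^2-\frac{1}{\epsilon}\,\frac{\Omega}{\Lambda}\|f\|^2\]
does not collapse to $\frac{(1-\mu)\epsilon}{(1+\epsilon)\Theta}\|f\|^2$. Writing $r=\Omega/\Lambda$, your constant is $\frac{1-r}{(1+\epsilon)\Theta}-\frac{r}{\epsilon}$ while the theorem's is $\frac{\epsilon}{(1+\epsilon)\Theta}-\epsilon r$; for $\Theta=1$, $r=0.4$, $\epsilon=1$ (so $\mu=0.8<1$) these are $-0.1$ and $+0.1$, so your bound is vacuous in a regime where the theorem is not, and one can check that no re-tuning of the Peter--Paul parameter recovers the stated constant. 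Two things go wrong: the large weight $1/\epsilon$ lands on the error term $|\langle g_j,\tilde\psi_j\rangle|^2$ instead of on the measurement, and you discard $\sum_j\|g_j\|^2$ from $\|f\|^2$ once (via $\sum_j|c_j|^2\ge(1-r)\|f\|^2$) and then subtract it a second time.

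The repair is to run the estimate in the reverse direction, as a Poincar\'e-type upper bound on $\|f\|^2$ rather than a direct lower bound on the measurements. From $|c_j|^2=\theta_j\,|\langle f,\psi_j\rangle-\langle g_j,\tilde\psi_j\rangle|^2$, Peter--Paul with the weights the other way around gives $|c_j|^2\le\frac{(1+\epsilon)\theta_j}{\epsilon}|\langle f,\psi_j\rangle|^2+(1+\epsilon)\theta_j|\langle g_j,\tilde\psi_j\rangle|^2$, and the sharper bound $|\langle g_j,\tilde\psi_j\rangle|^2\le\|\tilde\psi_j\|^2\|g_j\|^2=(1-1/\theta_j)\|g_j\|^2$ (which you did not exploit) yields $|c_j|^2+\|g_j\|^2\le\frac{(1+\epsilon)\Theta}{\epsilon}|\langle f,\psi_j\rangle|^2+\bigl((1+\epsilon)\theta_j-\epsilon\bigr)\|g_j\|^2$ with $(1+\epsilon)\theta_j-\epsilon\le(1+\epsilon)\Theta$. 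Summing over $j$ and inserting $\sum_j\|g_j\|^2\le(\Omega/\Lambda)\|f\|^2$ gives $\|f\|^2\le\frac{(1+\epsilon)\Theta}{\epsilon}\sum_j|\langle f,\psi_j\rangle|^2+\mu\|f\|^2$, which rearranges to exactly the stated lower frame bound.
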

It is also proved that the coefficients $\langle f,\psi_j\rangle$ identify $f\in {\rm PW}_\Omega$ uniquely.

The condition on $\Omega $ which implies that $\Omega<\Lambda/\Theta=\inf \lambda_{1,j}\inf |\langle \psi_j,\,\varphi_{0,j}\rangle|^2$ is fairly restrictive since it implies that $\Omega<\lambda_{1,j}$. 
The Pesensons raise the question  how big is the admissible interval $[0,\Lambda/\Theta)$.
In the limiting case of a disconnected graph, this interval contains only the zero eigenvalue of each cluster.
In the weighted case, if one allows for  very weak weighted connections
between disjoint clusters, and accounts for continuity of the spectrum with respect to the weights, the admissible interval will then contain a number of eigenvalues equal to the number of clusters.

The Pesensons go on to give examples of sampling  functionals defined by $\psi_j$, including cluster averages,  
pointwise samples, and variational splines, explaining how the constants in (\ref{eq:graph_pp}) depend on these choices.
They then raise the question whether similar results can be obtained for a segment of the spectrum strictly
larger than  the interval $[0,\Lambda/\Theta)$. 
This
raises a broader question, whether sampling
inequalities like (\ref{eq:graph_pp}) for clustered graphs can be extended beyond Paley--Wiener spaces that encode
 a single eigenvalue per cluster.  
Stated differently:
for a given graph $\mathcal{G}$ and vertex partition $\mathcal{S}=\{S_j\}$, 
is there a
frame for ${\rm PW}_\Omega$ in which each frame element is well concentrated in one of the $S_j$?
 In contrast to the case of a homogeneous setting like the real line in Thm.~\ref{thm:prolate_riesz_basis},
 how  sampling or frame inequalities apply will depend on the geometry of the clusters relative to the whole graph.
We will consider two similar looking, but structurally different examples in Sects.~\ref{sec:vertexsub} and \ref{sec:cartesian} to elaborate on this point.
First,  we make some general remarks about how local versus global geometry is reflected in the Laplacian.

Consistent with classical sampling theorems, in order to obtain sampling bounds for larger values of $\Omega$, including
values that account for larger eigenvalues of the cluster Laplacians, requires enough sampling
functionals per cluster to account for larger cluster eigenmodes.  The unnormalized graph Laplacian $L$ can be expressed as $L=\tilde{L}+\sum_j L_j$
where $\tilde{L}$ accounts for connections between clusters. If $v$ is an \emph{insulated} vertex, meaning that
all neighbors of $v$ in $G$ lie in $S_j$, then $(Lf)(v)=(L_jf)(v)$.  
More generally, if $\tilde{L}$ has small norm, it suggests that  an inequality like (\ref{eq:graph_pp})
should hold on a subspace of ${\rm PW}_\Omega$ accounted by eigenvalues of separate $L_j$ smaller than 
$\Omega -\|\tilde{L}\|$, leaving the question how to account for global behavior encoded in $\tilde{L}$.
 
 \subsection{Cycles and cubes 
 \label{ssec:cycle_cube}}
 In Sects.~\ref{sec:vertexsub} and \ref{sec:cartesian} we study examples of graphs amenable to analytical methods to study concentration of low-spectrum components on relatively densely connected subsets of vertices. These graphs are combinations of cycles and cubes whose properties we outline now.

  The $m$-cycle $\mathcal{C}_m$ is the graph whose vertices are the elements of $\mathbb{Z}_m$ (we denote simply by $\ell$ 
  the vertex of the element $\ell\in\mathbb{Z}_m$)
 and adjacency is defined by $\ell\sim \ell'$ if and only if $\ell'= \ell\pm 1\mod m$. 
  The Laplacian $L(\mathcal{C}_m)$ of $\mathcal{C}_m$ is usually represented by the $m\times m$ matrix
 $2I-A_m$ where $A_m$ is the symmetric matrix with entries equal to $A_m (k,k+1)=1=A_m(k-1,k)$ and $A_m(k,\ell)=0$ if $|(k-\ell)\mod m|\neq 1$ (here entries are indexed by $0,\dots, m-1$ with addition modulo $m$).
 The eigenvectors of $L(\mathcal{C}_m)$ are, up to normalization,  $E_k(\ell)=e^{2\pi i k\ell/m}/\sqrt{m}$ with eigenvalue $2-2\cos (2\pi  k/m)=4\sin^2(\pi k/m)$.
 When $m=2$ these eigenvalues are zero and two.
 The  Fourier transform of $\mathcal{C}_m$ is represented by the matrix with columns $E_k$. When $m=2$ 
 it is the Haar matrix $H=\frac{1}{\sqrt{2} }\Bigl(\begin{matrix}1&1\\1& -1\end{matrix}\Bigr)$.
 
  The Cartesian product of two graphs $\mathcal{G}$ and $\mathcal{H}$ is the graph $\mathcal{G}\,\square\, \mathcal{H}$
 with $V(\mathcal{G}\,\square\, \mathcal{H})=V(\mathcal{G})\times V( \mathcal{H})$ and 
 $E(\mathcal{G}\,\square\, \mathcal{H})$ defined by 
 $(u_1,v_1)\sim (u_2,v_2)$ if $u_1\sim u_2$ and $v_1=v_2$ or $u_1=u_2$ and $v_1\sim v_2$.
The Laplacian $L(\mathcal{G}\,\square\, \mathcal{H})$ is the Kronecker product 
$L(\mathcal{G})\otimes L(\mathcal{H})$ of $L(\mathcal{G})$ and $L(\mathcal{H})$ 
(if $A$ is $m\times n$ then $A\otimes B$ has $m\times n$ blocks, and $(k,\ell)$-th block equal to $a_{k\ell} B$)  
and therefore the eigenvalues of $L(\mathcal{G}\,\square \,\mathcal{H})$ are sums of eigenvalues of 
 $L(\mathcal{G})$ and $L(\mathcal{H})$ (with corresponding multiplicities).

 The Boolean cube $\mathcal{B}_N$ is the $N$-fold Cartesian product of $\mathcal{C}_2$. 
It has vertices $\mathbb{Z}_2^N$ where two vertices $v=(\epsilon_1,\dots, \epsilon_N)$ and 
 $w=(\epsilon_1',\dots, \epsilon_N')$ are adjacent precisely when $v-w=e_i$ (subtraction in $\mathbb{Z}_2^N$) for some $i\in \{1,\dots, N\}$ where $e_i$ has one in the $i$th coordinate
 and zeros in the other coordinates. The unnormalized Laplacian eigenvalues are $0, 2,\dots, 2N$ where $2k$ has multiplicity $\binom{N}{k}$.
 The Fourier transform of $\mathcal{B}_N$
 is a Hadamard matrix $H_N$ of size $2^N\times 2^N$ whose columns, up to redindexing (reordering of rows and columns) are those of
 the $N$th Kronecker product of the Haar matrix $H$.
 The Fourier transform of $\mathbb{Z}_2^N$ can also be indexed by elements $\mathbf{\gamma}\in\mathbb{Z}_2^N$.
 In this case, if $h_\gamma$ is the column  of $H_N$ corresponding to $\gamma\in\mathbb{Z}_2^N$ then $h_\gamma(v)=2^{-N/2} (-1)^{\langle v,\gamma\rangle}$.
Further elaboration on these basic concepts on cubes relevant to following discussion can be found in \cite{Hogan2018,hogan2018spatiospectral}

 \section{Example 1: Sampling on Vertex substitution of cubes on cycles. \label{sec:vertexsub}}
 
  \subsection{Vertex substitution of cubes on cycles  \label{ssec:vertexsub}}
 Spectral properties of edge substitution graphs were studied by Strichartz in  \cite{strichartz_2010} and in \cite{strichartz_2016},
where sampling properties were also established.   
In edge substitution, a copy of a fixed graph $\mathcal{H}$ which has two designated vertices $v_0$ and $v_1$, is placed along each edge of
a graph $\mathcal{G}$ in such a way that $v_0$ and $v_1$ are identified with the terminals of each corresponding edge.
In the case of a $2$-regular graph (a cycle) it makes sense, instead, 
to substitute a fixed graph for each vertex of the cycle in such a way that  $v_0$ and $v_1$ become terminals of successive edges.  
We denote such a substitution of $\mathcal{H}$ for each vertex of $\mathcal{C}_m$ by 
$\mathcal{H}\vdash \mathcal{C}_m$. The symbol ``$\vdash$'' 
is used to indicate the asymmetric roles of the component graphs. 

 Our first set of examples of structured graphs consisting of several connected clusters are the graphs $\mathcal{B}_N\vdash \mathcal{C}_m$, in which each vertex of a cycle is replaced by a copy of $\mathcal{B}_N$.  We will denote by
  $\mathcal{B}_N^k$ the copy of $\mathcal{B}_N$ placed on the $k$th vertex of $\mathcal{C}_m$ ($k=0,\dots, m-1$)
  and by  $v_{\mathbf{0}}^k$ and $v_{\mathbf{1}}^k$ the vertices of $\mathcal{B}_N^k$ corresponding to the zero element
  $\mathbf{0}=(0,\dots,0)$ and ones-element $\mathbf{1}=(1,\dots, 1)$ of $\mathbb{Z}_2^N$.
 Since the transformation that replaces $v=(\epsilon_1,\dots, \epsilon_N)\in\mathbb{Z}_2^N$ by 
$\tilde{v}=(1+\epsilon_1,\dots, 1+\epsilon_N)\in\mathbb{Z}_2^N$  is an isomorphism of $\mathcal{B}_N$, it is not really critical whether we assign $v_{\mathbf{0}}^k$ or $v_{\mathbf{1}}^k$ as the neighbor of $v_{\mathbf{1}}^{k-1}$  (resp.~$v_{\mathbf{0}}^{k+1}$), but for simplicity we always assign $v_{\mathbf{1}}^{k-1} \sim v_{\mathbf{0}}^{k}$ and 
$v_{\mathbf{1}}^{k} \sim v_{\mathbf{0}}^{k+1}$, see Fig.~\ref{fig:b2c3}. Generically we will denote by $v^k$ the
copy of $v\in V(\mathcal{B}_N)$ lying in $ V(\mathcal{B}_N^k)$. 
We refer to $\mathcal{B}_N\vdash \mathcal{C}_m$ as the \emph{vertex substitution} of $\mathcal{B}_N$ on $\mathcal{C}_m$.   As in \cite{strichartz_2010}, we will use the term \emph{Dirichlet eigenvector} to refer to an eigenvector  of the 
Laplacian $L(\mathcal{B}_N)$ that vanishes at ${\mathbf{0}}$ and ${\mathbf{1}}$.

Denote by 
$\iota_k$ the mapping of a vertex function $\varphi$ on $\mathcal{B}_N$ to one on $\mathcal{B}_N\vdash \mathcal{C}_m$
such that $(\iota_k \varphi)(u^k)=\varphi(u)$, $u\in V(\mathcal{B}_N)$ and $(\iota_k \varphi)(u^\ell)=0$, $\ell\neq k$.
Any vertex in $V(\mathcal{B}_N\vdash \mathcal{C}_m)$ that is not a copy of $v_{\mathbf{0}}$ or $v_{\mathbf{1}}$ will be called
an \emph{insulated vertex}.  It is clear that, at any insulated vertex $u^k\in V(\mathcal{B}_N^k)$,
for any vertex function $f\in \ell^2(\mathcal{B}_N)$,  
$(L(\mathcal{B}_N\vdash \mathcal{C}_m))(\iota_k f)(u^k)= (\iota_k\, L(\mathcal{B}_N)f)(u^k)$.
The following is then clear.

\begin{lemma}\label{lem:dirichlet} If $\varphi$ is a Dirichlet eigenvector of $L(\mathcal{B}_N)$ then $\iota_k(\varphi)$ is  an eigenvector of
$L(\mathcal{B}_N\vdash \mathcal{C}_m)$ with the same eigenvalue as the $L(\mathcal{B}_N)$-eigenvalue of $\varphi$,
and vanishing outside  $\mathcal{B}_N^k$.
\end{lemma}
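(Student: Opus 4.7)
The plan is to verify the eigenvalue equation $L(\mathcal{B}_N\vdash\mathcal{C}_m)(\iota_k\varphi)(u) = \lambda\,\iota_k\varphi(u)$ pointwise at every vertex $u\in V(\mathcal{B}_N\vdash\mathcal{C}_m)$, where $\lambda$ is the $L(\mathcal{B}_N)$-eigenvalue of $\varphi$. I would split the vertex set into three types: (i) insulated vertices inside $\mathcal{B}_N^k$, (ii) the two boundary vertices $v_{\mathbf{0}}^k$ and $v_{\mathbf{1}}^k$ of $\mathcal{B}_N^k$, and (iii) all vertices lying in $\mathcal{B}_N^\ell$ with $\ell\neq k$.

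For case (i), the observation already made in the paragraph preceding the lemma does the work: at an insulated vertex $u^k$, every neighbor also lies in $\mathcal{B}_N^k$, so $L(\mathcal{B}_N\vdash\mathcal{C}_m)(\iota_k\varphi)(u^k)=\iota_k(L(\mathcal{B}_N)\varphi)(u^k)=\lambda\,\iota_k\varphi(u^k)$. For case (ii), consider $v_{\mathbf{0}}^k$ (the argument at $v_{\mathbf{1}}^k$ is identical by symmetry). This vertex has one extra neighbor $v_{\mathbf{1}}^{k-1}$ in the vertex substitution, but $\iota_k\varphi(v_{\mathbf{1}}^{k-1})=0$ since it lies outside $\mathcal{B}_N^k$, so that extra edge contributes only the self-term $\iota_k\varphi(v_{\mathbf{0}}^k)=\varphi(\mathbf{0})=0$ by the Dirichlet hypothesis. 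The remaining neighbors are the $N$ vertices $u^k\sim v_{\mathbf{0}}^k$ inside $\mathcal{B}_N^k$, and the corresponding sum reproduces $(L(\mathcal{B}_N)\varphi)(\mathbf{0})=\lambda\varphi(\mathbf{0})=0$. Both sides therefore vanish, matching $\lambda\,\iota_k\varphi(v_{\mathbf{0}}^k)=0$.

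For case (iii), $\iota_k\varphi$ is identically zero on $\mathcal{B}_N^\ell$ for $\ell\neq k$, so the right-hand side is zero. The left-hand side is a sum $\sum_{w\sim u^\ell}[\iota_k\varphi(u^\ell)-\iota_k\varphi(w)]$; every term is zero unless $u^\ell$ has a neighbor in $\mathcal{B}_N^k$, which happens only for $u^\ell\in\{v_{\mathbf{1}}^{k-1},v_{\mathbf{0}}^{k+1}\}$. But the single $\mathcal{B}_N^k$-neighbor of such a vertex is either $v_{\mathbf{0}}^k$ or $v_{\mathbf{1}}^k$, at which $\iota_k\varphi$ equals $\varphi(\mathbf{0})$ or $\varphi(\mathbf{1})$, which vanish by Dirichlet. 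Hence the left-hand side also vanishes.

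I do not expect any serious obstacle; the only point that needs a little care is the bookkeeping at the boundary vertices in cases (ii) and (iii), where the vertex substitution introduces a single extra edge across clusters, and one must be sure that both the diagonal (self-)contribution and the off-diagonal (neighbor) contribution from that extra edge are killed by the Dirichlet condition $\varphi(\mathbf{0})=\varphi(\mathbf{1})=0$. Once that check is made explicitly in both directions across the inter-cluster edge, all three cases combine to give $L(\mathcal{B}_N\vdash\mathcal{C}_m)(\iota_k\varphi)=\lambda\,\iota_k\varphi$, and the support statement is immediate from the definition of $\iota_k$.
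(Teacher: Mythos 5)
Your proof is correct and follows essentially the same route the paper takes: the paper records the insulated-vertex identity $(L(\mathcal{B}_N\vdash\mathcal{C}_m))(\iota_k f)(u^k)=(\iota_k\,L(\mathcal{B}_N)f)(u^k)$ and then declares the lemma ``clear,'' while you simply write out the remaining pointwise checks at the boundary vertices $v_{\mathbf{0}}^k$, $v_{\mathbf{1}}^k$ and at vertices of the other blocks, where the Dirichlet condition $\varphi(\mathbf{0})=\varphi(\mathbf{1})=0$ kills both contributions of the inter-cluster edges. Your version is a faithful, fully detailed rendering of the argument the paper leaves implicit.
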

We will also refer to any function of the form $\iota_k\varphi$, where $\varphi$ is a Dirichlet eigenvector of  $L(\mathcal{B}_N)$,
as a Dirichlet eigenvector (of $L(\mathcal{B}_N\vdash \mathcal{C}_m)$).

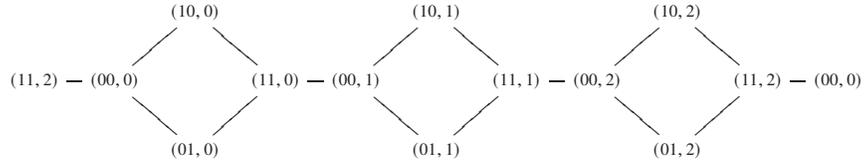
\begin{figure}[tbhp]
\begin{centering}
\vspace{1cm}
\begin{tiny}
\xymatrixrowsep{0.5cm}
\xymatrixcolsep{0.2cm}
$\xymatrix{
&& (10,0) \ar@{-}[rd]& & & (10,1) \ar@{-}[rd]& && (10,2) \ar@{-}[rd]&&\\
(11,2)\ar@{-}[r]&(00,0) \ar@{-}[ru]   \ar@{-}[rd]  & & (11,0) \ar@{-}[r] & (00,1) \ar@{-}[ru]   \ar@{-}[rd]  & & (11,1) \ar@{-}[r]  &(00,2) \ar@{-}[ru]   \ar@{-}[rd]  & & (11,2)\ar@{-}[r]& (00,0)\\
& & (01,0) \ar@{-}[ru]& & & (01,1) \ar@{-}[ru]& && (01,2) \ar@{-}[ru]&&\\
 }$
\end{tiny}
\caption{\label{fig:b2c3} Graphic representation of $\mathcal{B}_2\vdash\mathcal{C}_3$}
\end{centering}
\end{figure}

  \begin{lemma} \label{lem:dirichlet} For each fixed $K$ ($0<K<N$) the span of the $2K$-Dirichlet eigenvectors
  on $\mathcal{B}_N\vdash \mathcal{C}_m$ has dimension $m\bigl(\binom{N}{K}-1\big)$.
  \end{lemma}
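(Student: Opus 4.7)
The plan is to reduce to a single-cube calculation and then multiply by $m$ via the cluster embeddings $\iota_k$. I would work in the character eigenbasis of $L(\mathcal{B}_N)$: the vectors $h_\gamma(v) = 2^{-N/2}(-1)^{\langle v,\gamma\rangle}$, $\gamma\in\mathbb{Z}_2^N$, form an orthonormal eigenbasis with $L(\mathcal{B}_N) h_\gamma = 2|\gamma|\, h_\gamma$, so the $2K$-eigenspace of $L(\mathcal{B}_N)$ is $E_K=\mathrm{span}\{h_\gamma:|\gamma|=K\}$ with $\dim E_K=\binom{N}{K}$.

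The heart of the argument is that on $E_K$ the two Dirichlet conditions $\psi(\mathbf{0})=0$ and $\psi(\mathbf{1})=0$ collapse to a \emph{single} linear constraint. Indeed $h_\gamma(\mathbf{0})=2^{-N/2}$ while $h_\gamma(\mathbf{1})=2^{-N/2}(-1)^{|\gamma|}=2^{-N/2}(-1)^K$ for every $\gamma$ with $|\gamma|=K$. Writing $\psi=\sum_{|\gamma|=K}c_\gamma h_\gamma$, the two Dirichlet conditions become $\sum_\gamma c_\gamma=0$ and $(-1)^K\sum_\gamma c_\gamma=0$, the same constraint up to a sign. Hence the Dirichlet subspace $D_K\subset E_K$ has dimension exactly $\binom{N}{K}-1$.

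Finally, by the preceding lemma, for each $k\in\{0,\dots,m-1\}$ the map $\iota_k$ carries $D_K$ injectively into the $2K$-eigenspace of $L(\mathcal{B}_N\vdash\mathcal{C}_m)$. Since the supports $V(\mathcal{B}_N^k)$ are pairwise disjoint, the subspaces $\iota_k(D_K)$, $k=0,\dots,m-1$, are linearly independent, and by the definition adopted in the paper their direct sum is precisely the span of the $2K$-Dirichlet eigenvectors on $\mathcal{B}_N\vdash\mathcal{C}_m$; its dimension is $m\bigl(\binom{N}{K}-1\bigr)$. The only nonroutine step is the collapse of the two Dirichlet constraints into one, a consequence of $\mathbf{0}$ and $\mathbf{1}$ being antipodal in the cube so that every character in $E_K$ takes proportional values at these two points.
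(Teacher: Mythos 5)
Your proof is correct and follows essentially the same route as the paper: expand in the Hadamard eigenbasis $h_\gamma$ with $|\gamma|=K$, observe that $h_\gamma(\mathbf{0})$ and $h_\gamma(\mathbf{1})$ are the same constant up to the sign $(-1)^K$ so the two Dirichlet conditions reduce to the single constraint $\sum c_\gamma=0$, and then sum over the $m$ disjointly supported copies via $\iota_k$. If anything, your version is slightly sharper in making explicit that the Dirichlet subspace \emph{equals} (not merely is contained in) the hyperplane $\{\sum c_\gamma=0\}$, which is what pins the dimension down to exactly $\binom{N}{K}-1$.
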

  
  \begin{proof} There are $\binom{N}{K}$ linearly independent (mutually orthogonal) Hadamard eigenvectors $h_\gamma$ with 
  $L(\mathcal{B}_N)$-eigenvalue $2K$.  
 Recall that $h_\gamma(v)=2^{-N/2} (-1)^{\langle v,\gamma\rangle}$. In particular,  $h_\gamma({\mathbf{0}})=2^{-N/2}$
 for every $\gamma$ and  $h_\gamma({\mathbf{1}})=2^{-N/2}(-1)^{\#\{j:\gamma_j=1\}}$ for each 
 $\gamma\in\mathbb{Z}_2^N$.   Therefore $h_\gamma({\mathbf{1}})=2^{-N/2} (-1)^K$ when $\sum \gamma_i=K$ as
 is the case for each $2K$-eigenvector of $L(\mathcal{B}_N)$.
An element of the span of the $2K$-eigenvectors has the form $\sum_{|\gamma|=K} c_\gamma h_\gamma$.
  The 
  closed subspace generated by coefficients $\{\{c_\gamma\}_{|\gamma|=K}:\sum c_\gamma=0\}$ has one-dimensional orthocomplement ($c_\gamma$ constant)
and therefore has dimension $\binom{N}{K}-1$.  
This subspace of the $2K$-eigenspace of $L(\mathcal{B}_N)$ evidently contains the Dirichlet eigenvectors of $\mathcal{B}_N$.
There is one isomorphic copy of this subspace associated with block $\mathcal{B}_N^k$  in 
$\ell^2(\mathcal{B}_N\vdash \mathcal{C}_m)$. Since these copies have disjoint supports, the dimension of the direct sum of these copies is $m\bigl(\binom{N}{K}-1\bigr)$.
  \end{proof}
  
  This raises the question of eigenvalues of $L(\mathcal{B}_N\vdash \mathcal{C}_m)$ that are not Dirichlet.
It is clear from the proof of Lem.~\ref{lem:dirichlet} that $L(\mathcal{B}_N)$ itself has $\binom{N}{K}-1$ Dirichlet
  eigenvalues equal to $2K$ ($0<K<N$), leaving one eigenvalue for each $K$ that is not Dirichlet. 
  We claim that the remaining $2K$-eigenvalue of $L(\mathcal{B}_N)$ has an eigenvector of the form  
  $\varphi=c_K\sum_{|\gamma|=K} h_\gamma$.
  We refer to such $\varphi$ as the Neumann eigenvector of $L(\mathcal{B}_N)$ with eigenvalue $2K$.

   \begin{lemma} \label{lem:neumann} For each fixed $K$ ($0\leq K\leq N$) the vector $h_{n,K}=c_K\sum_{|\gamma|=K} h_\gamma$ 
   ($c_K$ is chosen to make $\|h_{n,K}\|=1$)
   is a Neumann eigenvector of $L(\mathcal{B}_N)$. 
   The vectors $h_{n,K}$ are mutually orthogonal for different $K$ and are orthogonal to the Dirichlet eigenvectors of $L(\mathcal{B}_N)$.
  \end{lemma}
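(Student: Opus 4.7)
The plan is to lean on three pieces of structure already in place: the explicit Hadamard formula $h_\gamma(v)=2^{-N/2}(-1)^{\langle v,\gamma\rangle}$, the symmetry of $L(\mathcal{B}_N)$ with its attendant spectral theorem, and the characterization of the Dirichlet subspace of the $2K$-eigenspace that was extracted in the proof of the preceding lemma.

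First, since each $h_\gamma$ with $|\gamma|=K$ is an $L(\mathcal{B}_N)$-eigenvector with eigenvalue $2K$, any linear combination of these, in particular $h_{n,K}=c_K\sum_{|\gamma|=K}h_\gamma$, is again a $2K$-eigenvector. To confirm that $h_{n,K}$ is of Neumann type rather than Dirichlet, I would simply evaluate at $\mathbf{0}$ and $\mathbf{1}$, obtaining $h_{n,K}(\mathbf{0})=c_K\binom{N}{K}2^{-N/2}$ and $h_{n,K}(\mathbf{1})=(-1)^K c_K\binom{N}{K}2^{-N/2}$, both of which are nonzero, so $h_{n,K}$ fails to vanish at the two distinguished vertices.

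For the orthogonality claims I would distinguish two cases. When $K\neq K'$ the vectors $h_{n,K}$ and $h_{n,K'}$ lie in distinct eigenspaces of the symmetric operator $L(\mathcal{B}_N)$ (with eigenvalues $2K$ and $2K'$) and hence are orthogonal by the spectral theorem. The same argument dispatches orthogonality between $h_{n,K}$ and any Dirichlet eigenvector having eigenvalue different from $2K$. The one remaining case is a Dirichlet eigenvector $\varphi$ inside the $2K$-eigenspace: by the preceding lemma's proof, $\varphi=\sum_{|\gamma|=K}d_\gamma h_\gamma$ with $\sum_\gamma d_\gamma=0$, whereupon orthonormality of $\{h_\gamma\}$ yields $\langle h_{n,K},\varphi\rangle=c_K\sum_\gamma \overline{d_\gamma}=0$.

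There is no real obstacle here; the substantive content is the observation that the preceding lemma has already diagonalized the $2K$-eigenspace into its Dirichlet subspace, cut out by the single linear constraint $\sum_\gamma d_\gamma=0$ on the Hadamard coefficients, and a one-dimensional orthogonal complement, of which $h_{n,K}$ is precisely the unit-normalized generator.
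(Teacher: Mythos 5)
Your proof is correct and follows essentially the same route as the paper's: linearity within the $2K$-eigenspace, orthonormality of the Hadamard vectors for the cross-$K$ statements, and the constraint $\sum_\gamma d_\gamma=0$ characterizing the Dirichlet subspace for the same-$K$ case. Your two small additions---checking $h_{n,K}(\mathbf{0})$ and $h_{n,K}(\mathbf{1})$ are nonzero, and citing the spectral theorem for distinct eigenvalues rather than computing directly with $\langle h_\gamma,h_{\gamma'}\rangle=\delta_{\gamma,\gamma'}$---are harmless refinements, not a different argument.
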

  
  \begin{proof} Since each $h_\gamma$ with $|\gamma|=K$ is a $2K$-eigenvector of $L(\mathcal{B}_N)$, the same
  holds for  $\sum_{|\gamma|=K} h_\gamma$.  Since the Hadamard vectors are orthogonal ($\langle h_\gamma,\, h_{\gamma'}\rangle=\delta_{\gamma,\gamma'}$), it follows that $\langle h_{n,K},\, h_{n,K'}\rangle=0$ if $K\neq K'$.  Orthogonality between
  Dirichlet and Neumann eigenvectors for different $K$ follows in the same way. For the same $K$, if $\varphi=\sum_{|\gamma|=K} c_\gamma h_\gamma$ is such that $\sum_{|\gamma|=K} c_\gamma=0$ then 
  \[\langle \varphi,\, h_{n,K}\rangle 
  =c_K\sum_{|\gamma|=|\gamma'|=K} c_\gamma \langle h_\gamma, \, h_\gamma'\rangle = 
 c_K\sum_{|\gamma|=|\gamma'|=K} c_\gamma\delta_{\gamma,\gamma'}=   c_K\sum_{|\gamma|=K} c_\gamma  = 0 \, .
  \]
  This proves that $h_{n,K}$ is orthogonal to any Dirichlet eigenvector and completes the proof of the Lemma.
  \end{proof}

Concatenating $2K$-Dirichlet eigenvectors supported on $\mathcal{B}_N^k$, $k=0,\dots, m-1$
  results in a $2K$-eigenvector of $L(\mathcal{B}_N\vdash \mathcal{C}_m)$.  Remaining eigenvectors of 
  $L(\mathcal{B}_N\vdash \mathcal{C}_m)$ are formed by joining elements of Neumann eigenspaces on the
  $\mathcal{B}_N^k$ in such a way that the resulting vertex function on $\mathcal{B}_N\vdash \mathcal{C}_m$ is 
  a global eigenvector of $L(\mathcal{B}_N\vdash \mathcal{C}_m)$.  We do this by leveraging the exponential structure of 
  eigenvectors of $L(\mathcal{C}_m)$ to define \emph{augmented Laplacian} operators on  $\mathcal{B}_N$
  whose eigenvectors can be concatenated to form eigenvectors of $L(\mathcal{B}_N\vdash \mathcal{C}_m)$.
  
  For $\alpha\in \mathbb{C}\setminus 0$, define a \emph{corner matrix} $C_\alpha$ of size $2^N\times 2^N$ such that
  \[C_\alpha(1,1)=1=C_\alpha(2^N,2^N),\,  C_\alpha(1,2^N)=-\frac{1}{\alpha}, \,\, {\rm and }\,\, C_\alpha(2^N,1)=-\alpha \]
  and $C_\alpha(i,j)=0$ otherwise. 
  $C_\alpha$ has a single eigenvalue equal to two and all other eigenvalues equal to zero.
  Recall that we use lexicographic ordering on $\mathcal{B}_N$ so that for a $2^N\times 2^N$  matrix $A$ acting on $\ell^2(\mathcal{B}_N)$
  and vertex function $f$,  the first column of $Af$ is $f(\mathbf{0})$ times the first column of $A$ and the last column of 
  $Af$ is $f(\mathbf{1})$ times the last column of $A$.  Therefore, $C_\alpha f$ is the vector whose first entry is
   $f(\mathbf{0})-f(\mathbf{1})/\alpha$, whose last entry is $f(\mathbf{1})-\alpha f(\mathbf{0})$, and whose other entries
   are equal to zero.  We now assume that $|\alpha|=1$ and define the augmented Laplacian 
   \begin{equation}\label{eq:augmented_laplacian} L_\alpha(\mathcal{B}_N)=L(\mathcal{B}_N)+C_\alpha\, . \end{equation}
   
At any insulated vertex $v\in V(\mathcal{B}_N)$, $(L_\alpha f)(v)=(Lf)(v)$ ($L=L(\mathcal{B}_N)$).
   In particular, any $2K$-Dirichlet eigenvector of $L$ is also a $2K$-eigenvector of $L_\alpha$.
On the other hand, $(L_\alpha f)(\mathbf{0})=(Lf)(\mathbf{0})+f(\mathbf{0})-f(\mathbf{1})/\alpha$ and 
   $(L_\alpha f)(\mathbf{1})=(Lf)(\mathbf{1})+f(\mathbf{1})-\alpha f(\mathbf{0})$.  
   The non-Dirichlet eigenvectors of $L_\alpha$ can be regarded as perturbations of the Neumann eigenvectors of $L$.
   Since the norm of $C_\alpha$ is two and since the output vectors of $C_\alpha$ are zero in all but the first and last rows, 
   whereas the Neumann eigenvectors are nonzero in all rows, the norm of $C_\alpha$ on the Neumann subspace of $\ell^2(\mathcal{B}_N)$ is strictly smaller than two, and each non-Dirichlet eigenvector of $L_\alpha$ can be regarded
   as a nonnegative perturbation of the Neumann eigenvector of $L$ having the closest eigenvalue. 
  We summarize this analysis of Dirichlet and \emph{Neumann-type} eigenvectors of $L_\alpha(\mathcal{B}_N)$ as follows.
  
     \begin{lemma} \label{lem:alphalaplace} Let $|\alpha|=1$. The operator $L_\alpha(\mathcal{B}_N)$ has a complete set of eigenvectors of the following form: (i) $h$ is a Dirichlet eigenvector of $L(\mathcal{B}_N)$ (with the same eigenvalue), or (ii) $h=\sum_{\kappa=0}^N
   c_\kappa(\alpha) h_{n,\kappa}$. 
   There are $N+1$ mutually orthogonal eigenvectors of 
   $L_\alpha (\mathcal{B}_N)$ of the second type.
  \end{lemma}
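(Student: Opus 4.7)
The plan is to decompose $\ell^2(\mathcal{B}_N)$ as the orthogonal direct sum $\mathcal{D}\oplus \mathcal{N}$, where $\mathcal{D}$ is the span of the Dirichlet eigenvectors of $L(\mathcal{B}_N)$ and $\mathcal{N}=\mathrm{span}\{h_{n,0},\dots,h_{n,N}\}$, and to analyze $L_\alpha$ separately on each summand. By Lem.~\ref{lem:dirichlet} and Lem.~\ref{lem:neumann} (in its Neumann incarnation), $\mathcal{D}$ and $\mathcal{N}$ are orthogonal, each is spanned by $L(\mathcal{B}_N)$-eigenvectors, and a dimension count gives $\dim\mathcal{D}+\dim\mathcal{N}=\sum_{K=1}^{N-1}\bigl(\binom{N}{K}-1\bigr)+(N+1)=2^N$, so any eigenbases of $L_\alpha|_{\mathcal{D}}$ and $L_\alpha|_{\mathcal{N}}$ concatenate to a complete set.

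For part (i), I would exploit the explicit description of $C_\alpha f$ already given in the text: its only nonzero entries sit at $\mathbf{0}$ and $\mathbf{1}$ and are linear combinations of $f(\mathbf{0})$ and $f(\mathbf{1})$. Any Dirichlet eigenvector vanishes at $\mathbf{0}$ and $\mathbf{1}$, so $C_\alpha f=0$ for $f\in\mathcal{D}$; consequently $L_\alpha f=Lf$ on $\mathcal{D}$, which simultaneously establishes part (i) and shows that $\mathcal{D}$ is $L_\alpha$-invariant.

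For part (ii), the first step is to record that $L_\alpha$ is self-adjoint. $L(\mathcal{B}_N)$ is symmetric, and $C_\alpha$ is Hermitian precisely when $\overline{C_\alpha(2^N,1)}=C_\alpha(1,2^N)$, i.e.\ when $-\overline{\alpha}=-1/\alpha$, which is exactly the standing hypothesis $|\alpha|=1$. Given $L_\alpha$-invariance of $\mathcal{D}$ together with self-adjointness, the orthogonal complement $\mathcal{N}=\mathcal{D}^\perp$ is automatically $L_\alpha$-invariant as well. The restriction $L_\alpha|_{\mathcal{N}}$ is then a self-adjoint operator on the $(N+1)$-dimensional inner product space $\mathcal{N}$, and the finite-dimensional spectral theorem produces an orthonormal basis of $\mathcal{N}$ consisting of $L_\alpha$-eigenvectors. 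Each such eigenvector, being in $\mathcal{N}$, has the claimed form $\sum_{\kappa=0}^{N}c_\kappa(\alpha)h_{n,\kappa}$, yielding $N+1$ mutually orthogonal Neumann-type eigenvectors.

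The only slightly delicate bookkeeping is the invariance argument for $\mathcal{N}$: one should resist the temptation to prove it directly (since $C_\alpha h_{n,\kappa}$ is supported at $\{\mathbf{0},\mathbf{1}\}$, and neither $\delta_{\mathbf{0}}$ nor $\delta_{\mathbf{1}}$ lies in $\mathcal{N}$ individually), and instead obtain it for free from self-adjointness plus invariance of $\mathcal{D}$. That is the main conceptual step; everything else is either an immediate computation or the spectral theorem.
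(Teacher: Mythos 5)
Your proof is correct, and it takes a genuinely different route from the paper's. The paper's own proof of Lemma~\ref{lem:alphalaplace} is a single sentence: it asserts that the type~(ii) eigenvectors have eigenvalues $2K+\epsilon_K$ with $\epsilon_K\in(0,2)$ for distinct $K$ and deduces mutual orthogonality from the separation of these eigenvalues, leaning on the preceding informal perturbation discussion (the norm of $C_\alpha$ on the Neumann subspace being strictly less than two). Your argument instead decomposes $\ell^2(\mathcal{B}_N)=\mathcal{D}\oplus\mathcal{N}$, checks that $C_\alpha$ annihilates $\mathcal{D}$ (giving part~(i) and $L_\alpha$-invariance of $\mathcal{D}$), verifies that $|\alpha|=1$ is exactly the condition making $C_\alpha$ Hermitian, and then invokes invariance of $\mathcal{N}=\mathcal{D}^\perp$ plus the spectral theorem on the $(N+1)$-dimensional complement. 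What your approach buys is completeness and orthogonality without having to locate the perturbed eigenvalues at all: you do not need the (not fully justified in the paper) claim that each $\epsilon_K$ lies strictly in $(0,2)$, nor that the Neumann-type eigenvalues are pairwise distinct, and you actually \emph{prove} existence of a full eigenbasis rather than assuming it. What the paper's route buys is the extra spectral information $2K+\epsilon_K$, which is used downstream in Theorem~\ref{thm:rtimes_eigenvectors} and in the discussion of ${\rm PW}_{2K}$; your proof, being purely structural, would still need that eigenvalue localization supplied separately. Your closing remark about resisting a direct invariance computation for $\mathcal{N}$ is well taken and is precisely the point where self-adjointness does real work.
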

  
  \begin{proof}
  The different eigenvectors of type (ii) have eigenvalue $2K+\epsilon_K$ ($\epsilon_K\in (0,2)$)
  for different $K\in\{0,\dots,N\}$ and are therefore mutually orthogonal. 
  \end{proof}

  \paragraph{Augmented Laplacians and eigenvectors of $L(\mathcal{B}_N\vdash \mathcal{C}_m)$}
  The eigenvectors of $L( \mathcal{C}_m)$ are $E_\nu=\{e^{2\pi i \nu\ell/m}/\sqrt{m}\}_{\ell=0}^{m-1}$.   
  For 
  $f\in \ell^2(\mathcal{B}_N\vdash \mathcal{C}_m)$ one has
  \[(L(\mathcal{B}_N\vdash \mathcal{C}_m) f)(v_{\mathbf{0}}^k)=(L(\mathcal{B}_N^k) f)(v_{\mathbf{0}}^k)+[f(v_{\mathbf{0}}^k)-f(v_{\mathbf{1}}^{k-1})]\]
  and similarly
  \[(L(\mathcal{B}_N\vdash \mathcal{C}_m) f)(v_{\mathbf{1}}^k)=(L(\mathcal{B}_N^k) f)(v_{\mathbf{1}}^k)+[ f(v_{\mathbf{1}}^k)- f(v_{\mathbf{0}}^{k+1})],\] 
  while  $(L(\mathcal{B}_N\vdash \mathcal{C}_m) f)(v) =(L(\mathcal{B}_N^k) f)(v)$ for any insulated vertex 
  $v\in V( \mathcal{B}_N^k) $.    Suppose now that the sample vectors $\{f(v_{\mathbf{0}}^k)\}_{k=0}^{m-1}$ and $\{f(v_{\mathbf{1}}^k)\}_{k=0}^{m-1}$ 
form eigenvectors of $L( \mathcal{C}_m)$.  Then there is a value $\nu\in \{0,\dots, m-1\}$ such
  that $f(v_{\mathbf{0}}^{k+1}) =e^{2\pi i \nu/m} f(v_{\mathbf{0}}^{k})$.  Assume that, for the same $\nu$, we also have
 $ f(v_{\mathbf{1}}^{k}) =e^{2\pi i \nu/m} f(v_{\mathbf{1}}^{k-1})$.  In this event,  for $\alpha=e^{2\pi i \nu/m}$ 
 one then has
   \[(L(\mathcal{B}_N\vdash \mathcal{C}_m) f)(v_{\mathbf{0}}^k)=(L(\mathcal{B}_N^k) f)(v_{\mathbf{0}}^k)
   +[f(v_{\mathbf{0}}^k)- e^{-2\pi i \nu/m} f(v_{\mathbf{1}}^{k}) ]=L_\alpha (\mathcal{B}_N^k) f)(v_{\mathbf{0}}^k)\]
  and similarly
  \[(L(\mathcal{B}_N\vdash \mathcal{C}_m) f)(v_{\mathbf{1}}^k)=(L(\mathcal{B}_N^k) f)(v_{\mathbf{1}}^k)
  +[f(v_{\mathbf{1}}^k)- e^{2\pi i \nu/m} f(v_{\mathbf{0}}^{k})] =L_\alpha (\mathcal{B}_N^k) f)(v_{\mathbf{1}}^k).\]
   
  Suppose now that the restriction of $\varphi$ to $\mathcal{B}_N^0$ is an eigenvector of $L_\alpha(\mathcal{B}_N^0)$
  where $\alpha =e^{ 2\pi i \nu/m}$, in other words, this restriction has the form $\iota_0 f$ where $f$   
  is an eigenvector of $L_\alpha(\mathcal{B}_N)$, and suppose that the restriction of $\varphi$ to $\mathcal{B}_N^k$ is equal to $e^{2\pi i \nu k/m} \iota_k f  $ for the same $f$, for each $k=0,\dots, m$.  We conclude from the observations above then that 
  $\varphi$ is an eigenvector of  $L(\mathcal{B}_N\vdash \mathcal{C}_m)$ whose eigenvalue is equal to the 
  $L_\alpha(\mathcal{B}_N)$-eigenvalue of $f$ for $\alpha =e^{ 2\pi i \nu/m}$.
  We can use Lem.~\ref{lem:alphalaplace} to describe a complete family of eigenvectors of $L(\mathcal{B}_N\vdash \mathcal{C}_m)$ as follows.
  
  \begin{theorem}\label{thm:rtimes_eigenvectors}  The eigenvectors $\varphi$ of $L(\mathcal{B}_N\vdash \mathcal{C}_m)$
  have one of the following types:

\noindent (i)  Dirichlet type:  $\varphi$ is  supported in $\mathcal{B}_N^k$ for some $k\in \mathbb{Z}_m$ 
and is a Dirichlet $2K$-eigenvector
  of $L(\mathcal{B}_N^k)$  for a fixed value of $K\in \{0,1,\dots, N\}$, or
 
\noindent  (ii) Neumann type: the sample vectors $\{\varphi(v_{\mathbf{0}}^k)\}$ and $\{\varphi(v_{\mathbf{1}}^k)\}$, $k=0,\dots, m-1$, are eigenvectors of $L(\mathcal{C}_m)$ and the restriction
  of $\varphi$ to  $\mathcal{B}_N^{(k)}$ is an eigenvector of  $L_\alpha(\mathcal{B}_N)$, $\alpha=e^{2\pi i \nu/m}$
  for a fixed $\nu\in\mathbb{Z}_m$. 
  
 \noindent  The number of linearly independent $2K$-eigenvectors of Dirichlet type  is 
  $m\bigl(\binom{N}{K}-1\bigr)$ for each $K=0,\dots, N$ and the number of linearly independent eigenvectors of Neumann type 
  is $m(N+1)$, with values indexed by $K$ and $\nu$. 
  Together, the linearly independent sets of eigenvectors of Dirichlet or Neumann type form a complete set of $m 2^N$ eigenvectors of 
   $L(\mathcal{B}_N\vdash \mathcal{C}_m)$.
  \end{theorem}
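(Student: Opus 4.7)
My plan is to read Theorem \ref{thm:rtimes_eigenvectors} as two assertions: (a) the vectors described in (i) and (ii) are eigenvectors, and (b) there are enough of them (exactly $m\cdot 2^N$, mutually independent) to form a complete set. With that reading, the hard work is already done in Lemmas \ref{lem:dirichlet}, \ref{lem:neumann}, \ref{lem:alphalaplace} and the calculation immediately preceding the theorem; what remains is assembling these pieces and doing the bookkeeping.

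For (i) I would cite the first Lemma \ref{lem:dirichlet}: if $\varphi$ is a Dirichlet $2K$-eigenvector of $L(\mathcal{B}_N)$, then $\iota_k\varphi$ is a $2K$-eigenvector of $L(\mathcal{B}_N\vdash\mathcal{C}_m)$, because the extra terms in the global Laplacian at $v_{\mathbf{0}}^k$ and $v_{\mathbf{1}}^k$ vanish. The second Lemma \ref{lem:dirichlet} (dimension count) gives $\binom{N}{K}-1$ mutually orthogonal Dirichlet $2K$-eigenvectors per block, hence $m(\binom{N}{K}-1)$ in total, and different blocks produce disjointly-supported (hence orthogonal) vectors. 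Summing over $K=0,\dots,N$ (with the extreme cases contributing zero) gives a Dirichlet total of $m(2^N-N-1)$.

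For (ii) I would fix $\nu\in\mathbb{Z}_m$ and set $\alpha=e^{2\pi i \nu/m}$. By Lemma \ref{lem:alphalaplace}, $L_\alpha(\mathcal{B}_N)$ has $N+1$ non-Dirichlet eigenvectors $f_{\nu,K}$ (indexed by $K\in\{0,\dots,N\}$), mutually orthogonal since their eigenvalues $2K+\epsilon_K$ are distinct. The computation displayed just before the theorem shows that the vector $\varphi_{\nu,K}$ defined by $\varphi_{\nu,K}|_{\mathcal{B}_N^k}=e^{2\pi i \nu k/m}\,\iota_k f_{\nu,K}$ satisfies
\[
L(\mathcal{B}_N\vdash\mathcal{C}_m)\,\varphi_{\nu,K}=\mu_{\nu,K}\,\varphi_{\nu,K}
\]
with $\mu_{\nu,K}$ the $L_\alpha(\mathcal{B}_N)$-eigenvalue of $f_{\nu,K}$; the checks at insulated vertices use the insulated identity, while the checks at $v_{\mathbf{0}}^k$ and $v_{\mathbf{1}}^k$ are precisely the two displayed equations $(Lf)(v_{\mathbf{0}}^k)\mapsto (L_\alpha f)(v_{\mathbf{0}}^k)$ from the excerpt. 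Letting $\nu$ and $K$ vary gives $m(N+1)$ Neumann-type eigenvectors.

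To wrap up, I would establish the following mutual orthogonalities and then invoke dimension counting. Within each block the Dirichlet eigenvectors are mutually orthogonal; across blocks their supports are disjoint. Within a fixed $\nu$, the $\varphi_{\nu,K}$ inherit orthogonality from Lemma \ref{lem:alphalaplace}; across different $\nu\neq\nu'$, the inner product factors over blocks as $\sum_{k=0}^{m-1} e^{2\pi i(\nu-\nu')k/m}\langle \iota_k f_{\nu,K},\,\iota_k f_{\nu',K'}\rangle$, which vanishes by the cycle orthogonality $\sum_k e^{2\pi i(\nu-\nu')k/m}=m\delta_{\nu,\nu'}$. Dirichlet-versus-Neumann orthogonality follows block by block: restricted to $\mathcal{B}_N^k$, the Dirichlet part lies in the span Lemma \ref{lem:neumann} identifies as the orthocomplement of the $h_{n,K}$'s, and each $f_{\nu,K}$ is a combination of $h_{n,\kappa}$'s. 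The counts now add to $m(2^N-N-1)+m(N+1)=m\cdot 2^N=\dim\ell^2(\mathcal{B}_N\vdash\mathcal{C}_m)$, so the union is a complete orthogonal family of eigenvectors, and every eigenvector is a linear combination of vectors of the stated types. The main obstacle I anticipate is the last completeness step: it rests entirely on the arithmetic identity $2^N=(2^N-N-1)+(N+1)$ together with knowing that $L_\alpha(\mathcal{B}_N)$ really does split into Dirichlet plus exactly $N+1$ Neumann-type eigenvectors, which is why Lemma \ref{lem:alphalaplace} is the load-bearing input.
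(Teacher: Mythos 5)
Your proposal is correct and follows essentially the same route as the paper: Dirichlet eigenvectors are transplanted block by block via $\iota_k$, Neumann-type eigenvectors are built by modulating eigenvectors of the augmented Laplacian $L_\alpha(\mathcal{B}_N)$ with the cycle exponentials, and completeness follows from the count $m(2^N-N-1)+m(N+1)=m2^N$. The only difference is that you spell out the mutual orthogonality (including the cross-$\nu$ factorization via $\sum_k e^{2\pi i(\nu-\nu')k/m}=m\delta_{\nu,\nu'}$) where the paper simply asserts linear independence; this is a welcome elaboration, not a different argument.
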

  It is worth mentioning that for each group $K=0,\dots, N$ there is a collection of $m$ eigenvectors of Neumann type  
  whose eigenvalues are of the form $2K+\epsilon_{K,\nu}$ where $\epsilon_{K,\nu}\in (0,2)$,
  see Fig.~\ref{fig:slbig_evals_b7c21_1to620}.  These eigenvectors have the form $\sum_{\kappa=0}^N c_\kappa(\alpha) h_{n,\kappa}$
  on $\mathcal{B}_N^k$.
  
  
  \begin{proof}[of Thm.~\ref{thm:rtimes_eigenvectors}]
  By Lem.~\ref{lem:dirichlet}, there are $\binom{N}{K}-1$ linearly independent Dirichlet $2K$-eigenvectors of $L(\mathcal{B}_N)$.
  Since images $\iota_k\varphi$ of such eigenvectors for different $k$ have disjoint supports and since
   $(L(\mathcal{B}_N\vdash \mathcal{C}_m) \iota_k\varphi)=\iota_k( L(\mathcal{B}_N)\varphi )$ for a Dirichlet eigenvector, it follows that there
   are $m\bigl(\binom{N}{K}-1 \bigr)$ linearly independent eigenvectors of Dirichlet type.  
   The total number of linearly independent  eigenvectors of Dirichlet type is $\sum_{K=0}^N m\bigl(\binom{N}{K}-1\bigr)=m(2^N-N-1)$.
   
   The analysis above shows that if $\varphi$ is an eigenvector of $L_\alpha(\mathcal{B}_N)$ of norm one with $\alpha=e^{2\pi i \nu/m}$ 
   then the vector whose values  on $\mathcal{B}_N^k$ are of the form $e^{2\pi i \nu k/m} \iota_k\varphi$  are eigenvectors of 
$L(\mathcal{B}_N\vdash \mathcal{C}_m) $. By Lem.~\ref{lem:alphalaplace}, there are $N+1$ linearly independent Neumann-type 
eigenvectors of 
 $L_\alpha(\mathcal{B}_N)$ for each such $\alpha$ and, when $\varphi_0 ,\dots, \varphi_N$ is a linearly independent set of such vectors,
 the corresponding eigenvectors $e^{2\pi i \nu k/m} \iota_k\varphi_j$ also form a linearly independent set of $m(N+1)$ Neumann-type  elements of 
 $\ell^2(\mathcal{B}_N\vdash \mathcal{C}_m) )$.  Since Dirichlet and Neumann type eigenvectors are linearly independent, 
 adding the total numbers of  eigenvectors of each type one obtains a collection of $m2^N={\rm dim}(\ell^2 (\mathcal{B}_N\vdash \mathcal{C}_m) )$
 eigenvectors. Therefore these eigenvectors are complete in $\ell^2 (\mathcal{B}_N\vdash \mathcal{C}_m) $.    \end{proof}

  \bigskip
  
  \subsection{Band limiting and block limiting on $\mathcal{B}_N\vdash \mathcal{C}_m$ \label{ssec:ssl_vertexsub}}
  
  On $\mathcal{B}_N\vdash \mathcal{C}_m$ let $P_\Omega$ denote the operator that projects $f\in \ell^2(\mathcal{B}_N\vdash \mathcal{C}_m)$ onto the span of the eigenvectors of $L(\mathcal{B}_N\vdash \mathcal{C}_m)$ having eigenvalue at most
  $\Omega$ and let $Q$ denote the operator that truncates  $f\in \ell^2(\mathcal{B}_N\vdash \mathcal{C}_m)$  to 
  $\mathcal{B}_N^0$ by multiplying by zero outside  $\mathcal{B}_N^0$.  Since $P_\Omega $ and $Q$ are orthogonal
  projections, the norm of $P_\Omega Q$ is at most one.  Since any Dirichlet eigenvector of $L(\mathcal{B}_N^0)$
  having eigenvalue at most $\Omega$  is in the range of $P_\Omega$, there are nontrivial unit eigenvectors of $P_\Omega Q$
  when $\Omega\geq 2$.   Contrast this to the case of the real line where eigenvalues of the corresponding iterated project operator
   $P_\Omega(\mathbb{R}) Q(\mathbb{R})$---where $P_\Omega(\mathbb{R})$ bandlimits elements of 
   $L^2(\mathbb{R})$ to $[-\Omega,\Omega]$  and $Q(\mathbb{R})$ truncates elements of $L^2(\mathbb{R})$ to $[-1,1]$---has all eigenvalues strictly smaller than one.
  On the other hand, if $f\in {\rm PW}_\Omega(\mathcal{B}_N\vdash \mathcal{C}_m)$ is in the span of the Neumann-type 
  eigenvectors of $L(\mathcal{B}_N\vdash \mathcal{C}_m)$ identified in Thm.~\ref{thm:rtimes_eigenvectors}, 
  then  $f$ is not supported in $\mathcal{B}_N^0$ so, if it is an eigenvector of $P_\Omega Q$, then
  the eigenvalue is strictly smaller than one.

The analysis in Thm.~\ref{thm:rtimes_eigenvectors}  indicates that the dimension of 
${\rm PW}_{2K}$ is equal to 
\begin{equation}\label{eq:pwdim} m \sum_{\kappa=0}^{K} \left(\binom{N}{\kappa}-1\right)+m\sum_{\kappa=0}^{K-1} 1
=m\left(-1+ \sum_{\kappa=0}^{K} \binom{N}{\kappa}\right) =m({\rm dim}_K -1)
\end{equation}
where ${\rm dim}_K=\sum_{\kappa=0}^K \binom{N}{\kappa}$ is the volume of the $K$-ball in $\mathcal{B}_N$.
The first terms in (\ref{eq:pwdim}) come from Dirichlet eigenvectors 
that are perfectly localized on $\mathcal{B}_N^k$ for each $k=0,\dots, m-1$.  Those localized on $\mathcal{B}_N^0$
are also eigenvectors of $PQ$ having eigenvalue equal to one.  
Thus $PQ$ has  $\sum_{\kappa=0}^{K} \bigl(\binom{N}{\kappa}-1\bigr)$ eigenvalues equal to one. 
The second terms in (\ref{eq:pwdim}) correspond to Neumann-type
eigenvectors whose norms are equally distributed over all blocks  $\mathcal{B}_N^k$.  The example below suggests
that  there should be $K$ Neumann-type eigenvalues of $PQ$ that are \emph{close to one}, see 
Conj.~\ref{conj:pq-eigenvectors}, whose proof
requires showing that there are $K$ Neumann-type eigenvalues of $PQ$  larger than $1/2$. In turn, it requires proving that
the vector consisting of the samples $\{\varphi(v^k)\}$ where $v^k$ is the $\mathcal{B}_N^k$-representative of fixed $v\in\mathcal{B}_M$,
 is nearly cardinal for the corresponding eigenvectors $\varphi$, with the zero-block value equal to at least half the $\ell^2$-norm of the sample vector.

The conjecture implies that when $\Omega=2K\in 2\mathbb{N}$, there exists a basis of eigenvectors $\psi$ of $PQ$ for 
${\rm PW}_{2K}(\mathcal{B}_N\vdash\mathcal{C}_m)$, most of whose eigenvalues are equal to one, and
the rest having eigenvalues close to one.   By associating an operator  $PQ^{(k)}$  to each separate cluster $\mathcal{B}_N^k$
then,  the conjecture implies that there is a basis of ${\rm PW}_{2K}(\mathcal{B}_N\vdash\mathcal{C}_m)$
of vectors that are either perfectly or predominantly localized on a single cluster.  For such a basis one will have inequalities
of the form
\begin{equation}\label{eq:concentration} \mu_K \| Q^{(k)} f\|\leq 
\sum_{j=1}^{{\rm dim}_K-1} |\langle Q^{(k)} f,\,\psi_{j,k}\rangle |^2\leq \| Q^{(k)}f\|^2, \quad f\in {\rm PW}_{2K}
\end{equation}
uniformly in the block parameter $k$, where $\mu_K$ is the minimum eigenvalue of the Neumann-type eigenvectors of $PQ$.

We compare these observations  with Thm.~\ref{thm:pesenson2}, which identifies one measurement per cluster to provide a sampling criterion for ${\rm PW}_\Omega$ 
on a generic graph $\mathcal{G}$, but requiring $\Omega$ to be sufficiently small.  While graphs
$\mathcal{B}_N\vdash\mathcal{C}_m$ form an extremely specific family of graphs, 
 a consequence of (\ref{eq:concentration}) 
  is that on each block 
$\mathcal{B}_N^k$ there is an orthonormal family of \emph{measurement} vectors $\psi_{j,k}$  in ${\rm PW}_{2K}$ that are also orthogonal on $\mathcal{B}_{N}^k$,  such that ${\rm dim}_K-(K+1)$ of the vectors are
fully supported in $\mathcal{B}_N^k$, while the remaining $K$ vectors are concentrated in $\mathcal{B}_N^k$, and such that
most of the $\ell^2$-norm of $f\in {\rm PW}_{2K}$ on $\mathcal{B}_N^k$ is accounted for by the measurements  $\langle Q^{(k)} f,\,\psi_{j,k}\rangle$. 
In the specific case outlined in Sect.~\ref{ssec:specific_rtimes}, the value of $\mu_K$ in (\ref{eq:concentration}) is much larger than $1/2$.

To describe the measurement vectors $\psi_{j,k}$ we introduce the notation $f\dashv g$ for vertex functions $f$ and $g$ defined on $\mathcal{C}_m$ and $\mathcal{B}_N$ respectively, to denote 
the function defined on $\mathcal{B}_N\vdash \mathcal{C}$ that takes the value $f(k)\iota_k g(v)$ at the image of $v$ in $\mathcal{B}_N^k$.
Neumann type eigenvectors 
on $L(\mathcal{B}_N\vdash \mathcal{C}_m)$ 
described in Thm.~\ref{thm:rtimes_eigenvectors} 
have the form 
\begin{equation}\label{eq:neumann_type}
 E_\nu \dashv \sum_{\kappa=0}^N c_\kappa(\alpha,K) h_{n, \kappa}
\equiv E_\nu \dashv {\varphi}_{\nu,K,n}
\end{equation}
where $\{h_{n,\kappa}\}_{\kappa=0}^N$  forms an ONB for the  Neumann eigenvectors of $L(\mathcal{B}_N)$
and $ {\varphi}_{\nu,K,n}= \sum_{\kappa=0}^N c_\kappa(\alpha,K) \varphi_{n, \kappa}$ is an
eigenvector of $L_\alpha(\mathcal{B}_N)$ with $\alpha=e^{2\pi i \nu/m}$ as described in Thm.~\ref{thm:rtimes_eigenvectors}.
The index $K$ in (\ref{eq:neumann_type}) indexes the  $L(\mathcal{B}_N\vdash \mathcal{C}_m)$-eigenvalue
of $E_\nu \dashv {\varphi}_{\nu,K,n}$: as suggested by  Thm.~\ref{thm:rtimes_eigenvectors},
for each $K\in \{0,\dots,N\}$,  and each $\nu$, there is a unique Neumann-type 
 $L(\mathcal{B}_N\vdash \mathcal{C}_m)$-eigenvalue $2K+\epsilon_\nu$ with $\epsilon_\nu\in (0,2)$ corresponding
 to the unique $L_\alpha(\mathcal{B}_N)$-eigenvalue of the form $2K+\epsilon$. 
 Neumann-type eigenvectors of $PQ$ then  have the form 
  \[\psi_{\nu,K,n}=\sum_\nu\sum_{\kappa=0}^{K-1} \beta_{\nu,\kappa}  E_\nu \dashv  {\varphi}_{\nu,\kappa,n}
   = \sum_\nu E_\nu \dashv   \sum_{\kappa=0}^{K-1} \beta_{\nu,\kappa} {\varphi}_{\nu,\kappa,n}
 \equiv \sum_\nu E_\nu \dashv   {{\psi}}_{\nu,K}
  \] 
  where
  $P$
  is the orthogonal projection onto ${\rm PW}_{2K} (\mathcal{B}_N\vdash\mathcal{C}_m)$.
  Observe that in this case the $\mathcal{B}_N\vdash\mathcal{C}_m$ Neumann-type components 
  $ {\varphi}_{\nu,\kappa,n}$ contributing to $\psi_{\nu,K,n}$ 
  themselves lie in ${\rm PW}_{2K} (\mathcal{B}_N\vdash\mathcal{C}_m)$.

  \begin{conjecture}  \label{conj:pq-eigenvectors} Let $0<K<N$ and let $P,Q$ be defined as above.  
There are ${\rm dim}_K-1$ eigenvalues of $PQ$ larger than $1/2$, including ${\rm dim}_K-(K+1)$  eigenvalues equal to one and  another $K$ eigenvalues in $(1/2,1)$.  The cyclic shifts of the corresponding eigenvectors are linearly independent in $\ell^2(\mathcal{B}_N\vdash \mathcal{C}_m)$. Therefore,
these vectors form a basis for ${\rm PW}_{2K}$ which has dimension $m ( {\rm dim}_K-1)$.
\end{conjecture}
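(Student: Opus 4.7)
The plan is to exploit the $\mathbb{Z}_m$ cyclic symmetry of $\mathcal{B}_N \vdash \mathcal{C}_m$ to reduce the spectral problem for $PQ$ to that of a smaller self-adjoint operator on $\ell^2(\mathcal{B}_N)$. Let $S$ denote the shift automorphism $(Sf)(v^k) = f(v^{k-1})$, which commutes with $L$ and therefore with $P$, and decompose $\ell^2(\mathcal{B}_N \vdash \mathcal{C}_m) = \bigoplus_{\nu=0}^{m-1} V_\nu$ into $S$-isotypic components $V_\nu = \{E_\nu \dashv g : g \in \ell^2(\mathcal{B}_N)\}$, on which $S$ acts as multiplication by $e^{-2\pi i \nu/m}$. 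In this decomposition $P$ is block-diagonal, $P = \bigoplus_\nu P_{W_\nu}$, where by Thm.~\ref{thm:rtimes_eigenvectors} the $V_\nu$-slice $W_\nu$ of $\mathrm{PW}_{2K}$ decomposes as $W_\nu = D \oplus N_\nu$ with $D$ the $\nu$-independent Dirichlet subspace of $\ell^2(\mathcal{B}_N)$ of dimension $\mathrm{dim}_K - (K+1)$, and $N_\nu \subset \mathcal{H}_{\mathrm{Neum}}$ the $K$-dimensional span of $\{\varphi_{\nu,\kappa,n}\}_{\kappa=0}^{K-1}$. Expanding the block-$0$ indicator via the DFT on $\mathcal{C}_m$ gives $Q = (1/m) J \otimes I$ with $J$ the $m \times m$ all-ones matrix on the $\nu$ index.

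These representations imply that any eigenvector $f \leftrightarrow (f_\nu)$ of $PQ$ with eigenvalue $\lambda > 0$ satisfies $f_\nu = (m\lambda)^{-1} P_{W_\nu} g$ where $g := \sum_\nu f_\nu$ is an eigenvector of the self-adjoint operator $T := \sum_\nu P_{W_\nu}$ on $\ell^2(\mathcal{B}_N)$ with eigenvalue $m\lambda$. Because $D \perp N_\nu$ by Lem.~\ref{lem:neumann}, $D$ is $T$-invariant with $T|_D = mI$, yielding the $\mathrm{dim}_K - (K+1)$ $PQ$-eigenvalues equal to one, realized by the Dirichlet vectors $\iota_0 \varphi_D$. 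On the complementary subspace $\mathcal{H}_{\mathrm{Neum}}$ of dimension $N+1$, $T$ reduces to $T_N := \sum_\nu P_{N_\nu}$, a sum of $m$ rank-$K$ projections with total trace $mK$.

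The main obstacle is to prove that $T_N$ has exactly $K$ eigenvalues in $(m/2, m)$; the trace identity alone is insufficient, but the computation of Sect.~\ref{ssec:specific_rtimes} suggests that the top $K$ eigenvalues cluster close to $m$ while the remaining $N+1-K$ are small. The natural attack is via Courant--Fischer with the test subspace $V := \mathrm{span}\{h_{n,0}, \ldots, h_{n,K-1}\}$: the observation following (\ref{eq:augmented_laplacian}) that $C_\alpha$ has operator norm strictly less than two on $\mathcal{H}_{\mathrm{Neum}}$ makes each $\varphi_{\nu,\kappa,n}$ a controlled perturbation of $h_{n,\kappa}$, and a quantitative perturbation bound should yield $\|P_{N_\nu} g\|^2 \ge c\|g\|^2$ with $c > 1/2$ uniformly in $\nu$ for $g \in V$, hence $\langle T_N g, g\rangle > (m/2)\|g\|^2$. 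Making this estimate explicit requires computing $C_\alpha$ in the $\{h_{n,\kappa}\}$-basis and controlling the spectral gaps of $L_\alpha(\mathcal{B}_N)$ near $2K$, which is the most delicate step and the technical heart of the proof.

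Finally, for the linear independence of cyclic shifts, the scalar action of $S$ on $V_\nu$ turns a putative relation $\sum_{i,k} c_{ik} S^k \psi_i = 0$ into $\sum_i \widehat{c_i}(\nu) \tilde\psi_\nu^{(i)} = 0$ at each $\nu$, where $\tilde\psi_\nu^{(i)} \propto P_{N_\nu} g^{(i)}$; it therefore suffices to show that $P_{N_\nu}$ is injective on the top-$K$ eigenspace $G$ of $T_N$ for every $\nu$. A dimension count in $\mathcal{H}_{\mathrm{Neum}}$ ($\dim G + \dim(N_\nu^\perp \cap \mathcal{H}_{\mathrm{Neum}}) = N+1$) makes this a generic nondegeneracy condition, verifiable as a by-product of the perturbation analysis above. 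Together with the manifest independence of the Dirichlet shifts $\{\iota_k \varphi_D\}$ on disjoint blocks, this provides $m(\mathrm{dim}_K - 1) = \dim \mathrm{PW}_{2K}$ linearly independent vectors, which then necessarily form a basis.
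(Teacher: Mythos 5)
You should first note that the paper itself does not prove this statement: it is stated as Conjecture~\ref{conj:pq-eigenvectors} and left open, with the authors remarking only that a proof ``requires showing that there are $K$ Neumann-type eigenvalues of $PQ$ larger than $1/2$'' and that this in turn requires a near-cardinality property of the sample vectors $\{\varphi(v^k)\}$. Your symmetry reduction is sound and genuinely useful: the identification $Q=(1/m)J\otimes I$ in the isotypic decomposition, the resulting correspondence between nonzero eigenvalues of $PQ$ and eigenvalues of $T=\sum_\nu P_{W_\nu}$ scaled by $1/m$, the splitting $W_\nu=D\oplus N_\nu$ with $T|_D=mI$ accounting for the ${\rm dim}_K-(K+1)$ unit eigenvalues, and the reduction of the remaining question to the spectrum of $T_N=\sum_\nu P_{N_\nu}$ acting on the $(N+1)$-dimensional Neumann subspace are all correct and sharpen the problem considerably. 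This is a legitimate reformulation of the crux the paper points to.

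However, the proposal does not close the conjecture, and the gap sits exactly where the paper says the difficulty is. The claim that $T_N$ has exactly $K$ eigenvalues in $(m/2,m)$ is supported only by the phrase that a quantitative perturbation bound ``should yield'' $\|P_{N_\nu}g\|^2\ge c\|g\|^2$ with $c>1/2$ uniformly in $\nu$ on the test space ${\rm span}\{h_{n,0},\dots,h_{n,K-1}\}$; no such bound is derived, and it is not obvious one holds, since the eigenvectors $\varphi_{\nu,\kappa,n}$ of $L_\alpha(\mathcal{B}_N)$ depend on $\alpha$ through spectral gaps of $L_\alpha$ near $2\kappa$ that you would need to control explicitly (the norm of $C_\alpha$ on the Neumann subspace being ``strictly less than two'' is qualitative and does not by itself give $c>1/2$). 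Moreover, even granting that estimate, Courant--Fischer only yields \emph{at least} $K$ eigenvalues of $T_N$ above $m/2$; the conjecture asserts exactly $K$ in $(1/2,1)$ for $PQ$, so you also need the complementary upper bound that the remaining $N+1-K$ eigenvalues of $T_N$ are at most $m/2$, which the trace identity ${\rm tr}\,T_N=mK$ does not supply. Finally, the linear-independence claim is reduced to injectivity of $P_{N_\nu}$ on the top-$K$ eigenspace of $T_N$, which you label a ``generic nondegeneracy condition'' rather than prove. As it stands the proposal is a promising program consistent with the numerical evidence of Sect.~\ref{ssec:specific_rtimes}, not a proof.
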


That the Neumann-type $PQ$ eigenvectors should have eigenvalues larger than $1/2$ would rely on a sort of
  \emph{uniform cardinality} property of the sequences of $2^N$th samples of $\psi_{\nu,k,n}=
   \sum_\nu E_\nu \dashv  {\psi}_{\nu,K}$, see Fig.~\ref{fig:pqbig_evec_128samples_61626364_b7c21_1to384}
   for the case $N=7, m=21$.

    \begin{figure}[tbhp]
  \footnotesize{
\centering 
\includegraphics[width=\textwidth,height=2.5in]{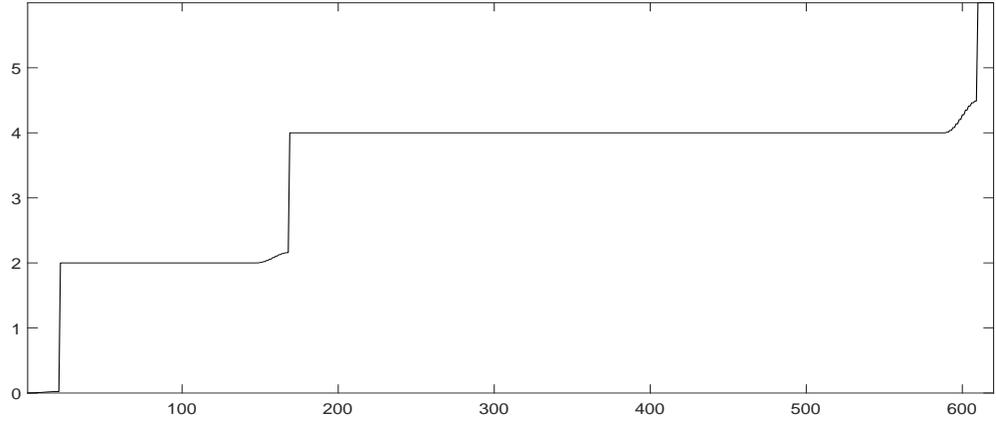}
\caption{\label{fig:slbig_evals_b7c21_1to620}
First 620 eigenvalues of $L(\mathcal{B}_7\vdash \mathcal{C}_{21})$
}
}
\end{figure}

  \subsection{\label{ssec:specific_rtimes} Specific case $N=7$, $K=3$, $m=21$ of $\mathcal{B}_N\vdash \mathcal{C}_m$}

    \begin{figure}[tbhp]
  \footnotesize{
\centering 
\includegraphics[width=\textwidth,height=2.5in]{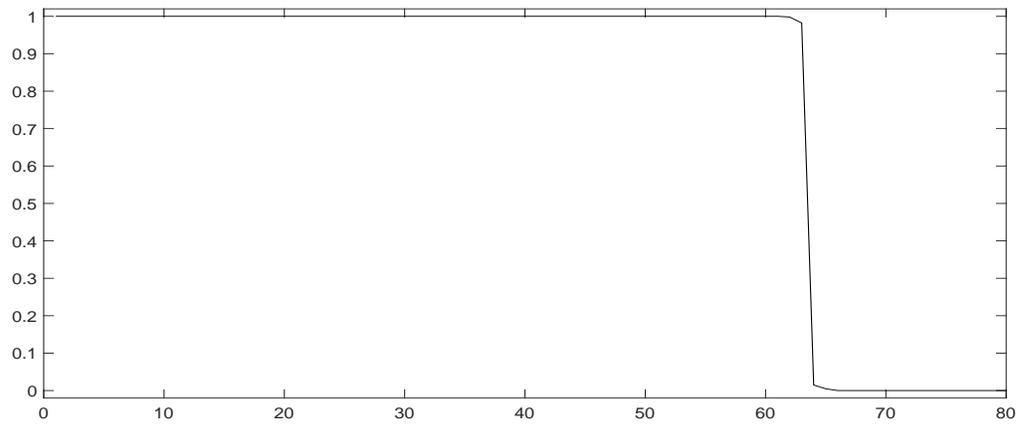}
\caption{\label{fig:spqbig_evals_b7c21_1to80}
Eigenvalues of $PQ$ on $\mathcal{B}_7\vdash \mathcal{C}_{21}$. First 60 eigenvalues equal one, corresponding
to Dirichlet eigenvectors of $L(\mathcal{B}_7)$
}
}
\end{figure}

    \begin{figure}[tbhp]
  \footnotesize{
\centering 
\includegraphics[width=\textwidth,height=2.5in]{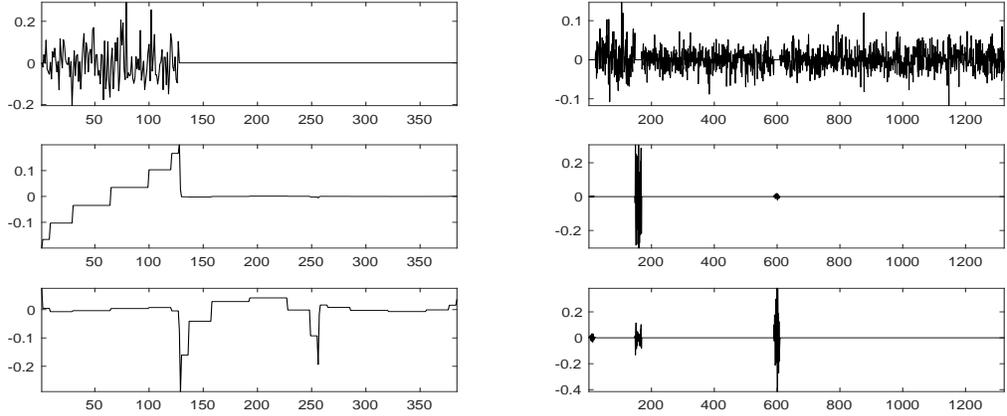}
\caption{\label{fig:pqbig_evecs_and_fourier_32_62_64_b7c21_1to384}
Eigenvectors of $PQ$ on $\mathcal{B}_7\vdash \mathcal{C}_{21}$ corresponding to eigenvalue indices 32, 62 and 64
whose respective $PQ$ eigenvalues are $1$, $0.9982$ and $0.0148$. The first three cycle lengths ($3\cdot 128$) are plotted on the left. Their Fourier
coefficients are plotted on the right.  Eigenvector 32 has its Fourier support in the Dirichlet eigenvalues 
while the other eigenvectors have their Fourier supports in the Neumann-type eigenvalues. 
}
}
\end{figure}

    \begin{figure}[tbhp]
  \footnotesize{
\centering 
\includegraphics[width=\textwidth,height=2.5in]{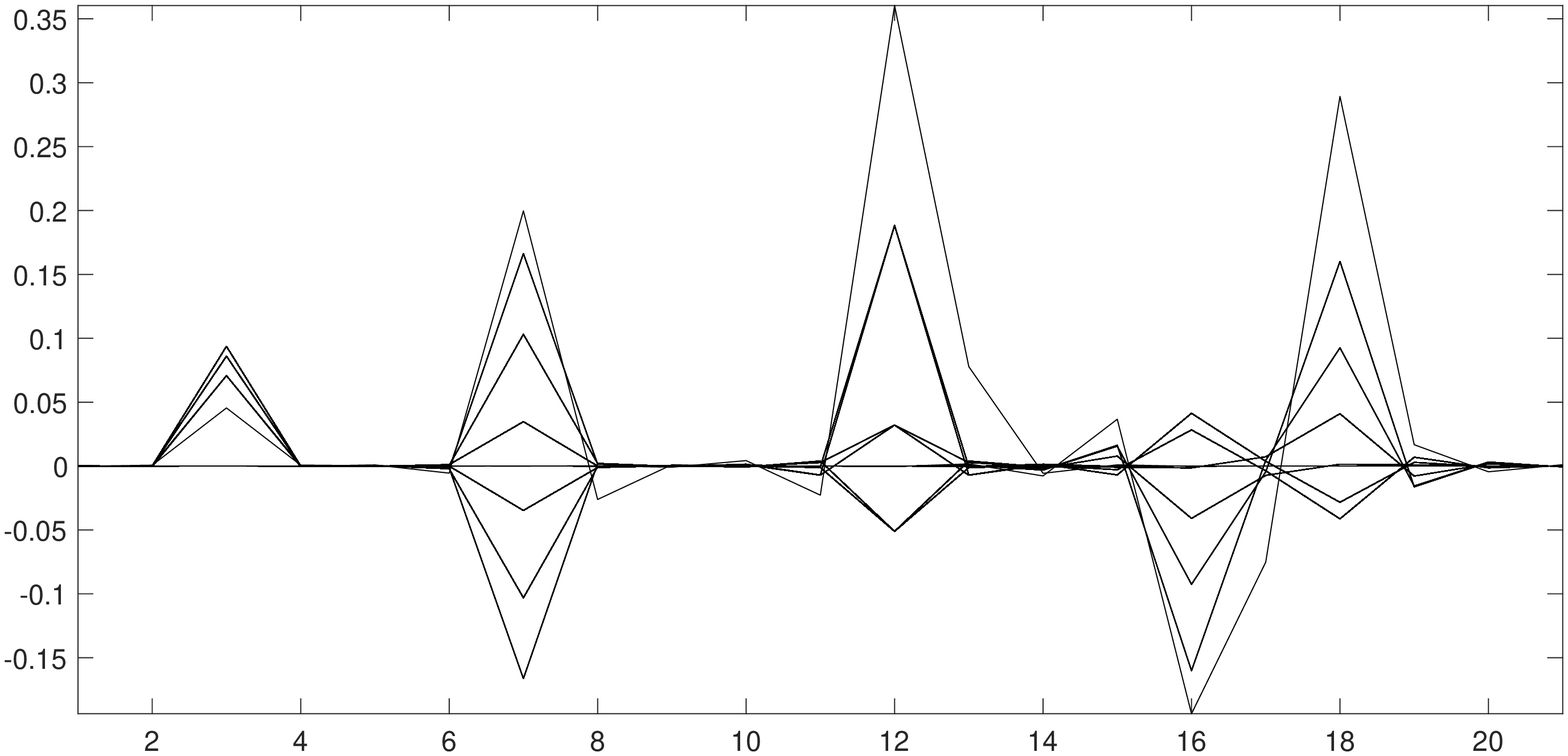}
\caption{\label{fig:pqbig_evec_128samples_61626364_b7c21_1to384}
Sample values of eigenvectors of $PQ$ on $\mathcal{B}_7\vdash \mathcal{C}_{21}$ corresponding to eigenvalue indices 61--64.
Each group plots  the set of sample values $\varphi_\nu(k+128*\ell),\ell=0,\dots 20$, $k=1,\dots, 128$ cyclically shifted by a fixed amount for each
 $\nu=61$ (left) to $\nu=64$ (right).  The samples are essentially cardinal (peak at the center and zeros elsewhere) for $\nu=61$ and become \emph{less cardinal} to the point of containing an oscillation for $\nu=64$
}
}
\end{figure}

  When $m=21$, $N=7$ and $K=3$ there are $m({\rm dim}_K-1)= 21\cdot \bigl(-1+\sum_{\kappa=0}^3 \binom{7}{\kappa}\bigr)
  =21\cdot 63=1323$
  eigenvalues of $L(\mathcal{B}_N \vdash \mathcal{C}_m)$
  with value at most $6$. Most of these come from the $21\bigl(\binom{7}{1}+\binom{7}{2}+\binom{7}{3}-3\bigr)= 21*60$ 
  $2K$-Dirichlet eigenvalues of $L(\mathcal{B}_7\vdash \mathcal{C}_{21})$ for $K=0,\dots, 3$. 
  As shown in Fig.~\ref{fig:slbig_evals_b7c21_1to620}, the zero eigenvalue and groups of 
  $m\bigl(\binom{N}{k}-1\bigr)=21\bigl(\binom{7}{k}-1\bigr)$ Dirichlet eigenvalues equal to $2k$  
  are punctuated by  transition intervals of length 21 between
  the eigenvalues $2k$ and $2(k+1)$ ($k=0,\dots, 2$).
The eigenvalues of $PQ$, where $P$ is the projection onto ${\rm PW}_6$, are plotted in 
Fig.~\ref{fig:spqbig_evals_b7c21_1to80}. There are 60 Dirichlet eigenvalues of $PQ$ equal to one, and three additional
eigenvalues close to one. Typical eigenvectors for the eigenvalues equal to one, close to one, and close to zero,
and their graph Fourier transforms, are plotted
in Fig.~\ref{fig:pqbig_evecs_and_fourier_32_62_64_b7c21_1to384}.
The eigenvalues close to one have eigenvectors whose samples $\varphi(v^k)$, where $v^k\in\mathcal{B}_N^k$, 
correspond to fixed $v\in\mathcal{B}_N$,
are nearly cardinal. These samples are plotted in 
Fig.~\ref{fig:pqbig_evec_128samples_61626364_b7c21_1to384}.

The matrix whose columns are cyclic shifts of the Neumann-type $PQ$-eigenvectors numerically has full rank. Since these shifts
are orthogonal to corresponding shifts of Dirichlet eigenvectors of $PQ$, together these shifts form a basis for ${\rm PW}_6$.
  
  
 \section{Example 2: Sampling on the Cartesian product of a cube and a cycle \label{sec:cartesian} }

\begin{figure}[tbhp]
\begin{centering}
\vspace{1cm}
\begin{tiny}
$\xymatrix{
&& (00,0)\ar@{-}[ld]  \ar@{-}[rdd]   \ar@{--}[rr] &  & (00,1)\ar@{-}[ld]  \ar@{-}[rdd]  \ar@{--}[rr]  && (00,2)\ar@{-}[ld]  \ar@{-}[rdd]   \ar@{--}@/_2pc/[llll]&&\\
& (10,0)\ar@{-}[rdd]   \ar@{--}[rr] && (10,1)\ar@{-}[rdd]   \ar@{--}[rr] &    &(10,2)\ar@{-}[rdd] \ar@{--}@/_2pc/[llll]&   && \\
 &&& (01,0)\ar@{-}[ld]  \ar@{--}[rr]    && (01,1)\ar@{-}[ld]  \ar@{--}[rr]  && (01,2)\ar@{-}[ld]  \ar@{--}@/^2pc/[llll]& \\
& &(11,0)  \ar@{--}[rr]  &&(11,1) \ar@{--}[rr]  & & (11,2)  \ar@{--}@/^2pc/[llll] && \\
}$
\end{tiny}
\vspace{0.5cm}
\caption{\label{fig:bn2quarec3} Graphic representation of $\mathcal{B}_2\square\,\mathcal{C}_3$}
\end{centering}
\end{figure}
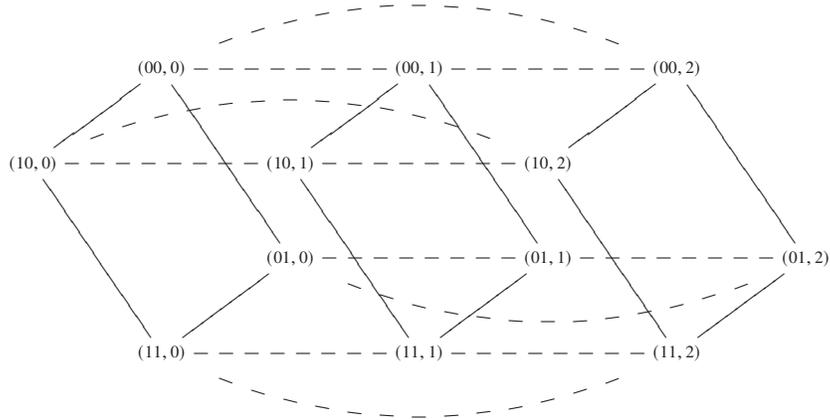

 In the Cartesian product $\mathcal{B}_N\square \,\mathcal{C}_m$ of a cube and a cycle,  each copy of the cube can be regarded as a cluster (to which we refer as a $\mathcal{B}_N$ slice). 
 In contrast to the graphs formed by substituting cubes for vertices of a cycle, in the Cartesian product with a cycle, each vertex in a single cluster is connected
 to a vertex in each of two adjacent clusters, see Fig.~\ref{fig:bn2quarec3}. This prohibits Dirichlet eigenvectors of the Laplacian 
 $L(\mathcal{B}_N\square \,\mathcal{C}_m)$. Nonetheless, $\mathcal{B}_N\square \,\mathcal{C}_m$ is $(N+2)$-regular with $N$ neighbors coming from inside a cluster, so if $N$ is large then 
 most of the degree of each vertex is associated with neighbors within the cluster.

  The eigenvalues of $\mathcal{B}_N\square \,\mathcal{C}_m$ are sums of eigenvalues of $\mathcal{B}_N$ and of 
 $\mathcal{C}_m$  \cite{brouwer_spectra}, thus have the form $2k_1+4\sin^2(\pi k_2/m)$, $0\leq k_1\leq N$, $0\leq k_2< m$. 
 Suppose that $K\geq 3$. Since the eigenvalues of $\mathcal{C}_m$ are at most four, the  $\mathcal{B}_N\square \,\mathcal{C}_m$-Laplacian eigenvalues  that are less than or equal to  $2K$
 are sums of the form (i) a $\mathcal{B}_N$-eigenvalue at most $2(K-2)$ plus any $\mathcal{C}_m$-eigenvalue; (ii) 
 a $\mathcal{B}_N$-eigenvalue at most $2(K-1)$ plus any $\mathcal{C}_m$-eigenvalue with value at most two, or 
 (iii) a $\mathcal{B}_N$-eigenvalue equal to $2K$ plus the zero eigenvalue of $\mathcal{C}_m$.  The
 $\mathcal{B}_N\square \,\mathcal{C}_m$-eigenvectors
 are tensor products of  $\mathcal{B}_N$-eigenvectors and $\mathcal{C}_m$-eigenvectors of the corresponding types, 
 whose eigenvalues add to at most $2K$. 
 Formally, letting $\Lambda_\nu(\mathcal{G})$ denote the $\lambda_\nu$-eigenspace of $L(\mathcal{G})$, we can write
 \begin{eqnarray}\label{eq:pw2k_decomp} {\rm PW}_{2K}( \mathcal{B}_N\square \,\mathcal{C}_m) 
& = & [{\rm PW}_{2K-4}( \mathcal{B}_N)\otimes \ell^2(\mathcal{C}_m) ] \notag\\
&\oplus & [\Lambda_{2K-2}( \mathcal{B}_N)\otimes {\rm PW}_2(\mathcal{C}_m) ]\notag \\
&\oplus&   [\Lambda_{2K}( \mathcal{B}_N)\otimes \Lambda_0(\mathcal{C}_m) ]\, .
 \end{eqnarray}

 The dimension of the space ${\rm PW}_{2K}( \mathcal{B}_N\square \,\mathcal{C}_m)$ 
is equal to the number of elements of a basis for ${\rm PW}_{2K}$.  
 In the next few paragraphs we count the dimensions of the three components in (\ref{eq:pw2k_decomp}), and discuss the behavior of the operator $PQ$ 
that truncates to a  $\mathcal{B}_N$ slice then bandlimits.
Specifically, let $Q$ denote the operator that truncates a vertex function $f$ to the zero slice $\mathcal{B}_N\square \{0\}$ of 
 $\mathcal{B}_N\square \,\mathcal{C}_m$. That is, $(Qf)(v)=f(v) $ if $v=(u,0)$ for some vertex $u$ of  $\mathcal{B}_N$, and
 $(Qf)(v)=0 $, otherwise.  For a fixed $K\in \{1,\dots, N-1\}$, let $P$ denote the operator that bandlimits $f$ to ${\rm PW}_{2K}$. That is, 
 $Pf=\sum_{\lambda_\nu\leq 2K}\sum_{j=1}^{{\rm dim} \Lambda_\nu} \langle f, h_{\nu, j} \rangle h_{\nu, j}$
 where $\{h_{\nu,j}\}_{j=1}^{{\rm dim} \Lambda_\nu}$ is an orthonormal 
 basis of eigenvectors  of the $\lambda_\nu$-eigenspace  $\Lambda_\nu$ of   $L(\mathcal{B}_N\square \,\mathcal{C}_m)$.
 
 Since $L(\mathcal{C}_m)$ has norm at most four, 
 ${\rm PW}_4(\mathcal{C}_m)=\ell^2(\mathcal{C}_m)$. 
In particular, the delta function $\delta(\ell)$ on $\mathcal{C}_m$  is in ${\rm PW}_4(\mathcal{C}_m)$.  It follows that  if a vertex
 function $f\in {\rm PW}_{2K-4}$ 
 is supported in the zero-$\mathcal{B}_N$ slice of $\mathcal{B}_N\square \,\mathcal{C}_m$  then  $PQf=f$.
 The space ${\rm PW}_{2K-4}(\mathcal{B}_N)$ has dimension ${\rm dim}_{K-2}=\sum_{\kappa=0}^{K-2} \binom{N}{\kappa}$. 
 Since each $\mathcal{B}_N$ slice contributes such a subspace of ${\rm PW}_{2K}(\mathcal{B}_N\square \,\mathcal{C}_m)$, 
 the component ${\rm PW}_{2K-4}( \mathcal{B}_N)\otimes \ell^2(\mathcal{C}_m)$ 
 of ${\rm PW}_{2K}(\mathcal{B}_N\square \,\mathcal{C}_m)$ in (\ref{eq:pw2k_decomp}) has  dimension 
 $m({\rm dim}_{K-2})$.
 
 Next, consider the component $\Lambda_{2(K-1)}( \mathcal{B}_N)\otimes {\rm PW}_2(\mathcal{C}_m) $  
 of ${\rm PW}_{2K}(\mathcal{B}_N\square \,\mathcal{C}_m)$.  For explicit calculations below we assume that $m$ has the form
 $4m'+1$, but the stated results hold for other values of $m$.
The factor ${\rm PW}_2(\mathcal{C}_m) $
has a basis of 
 exponentials $\{e^{2\pi i k\ell/m}\}$ such that $|k\, {\rm mod}\, m|\leq \lfloor m/4\rfloor$.  
There are $\lfloor (m+1)/2\rfloor$ such terms. Thus the space generated by corresponding tensor products has
 dimension $\lfloor (m+1)/2\rfloor \binom{N}{K-1}$.
 
Consider eigenvectors of $PQ$ that lie in this component. Those having the largest possible eigenvalue of $PQ$ are
 products of elements of $\Lambda_{2(K-1)}(\mathcal{B}_N)$ and elements of ${\rm PW}_2(\mathcal{C}_m)$ having 
 the largest possible value at zero. This value is that of the unit-normalized  Dirichlet kernel defining
 ${\rm PW}_2(\mathcal{C}_m)$ by convolution in $\ell^2(\mathcal{C}_m)$.
Since  $\{e^{2\pi i k\ell/m}\}_{\ell=0}^{m-1}$ has $L(\mathcal{C}_m)$-eigenvalue $4\sin^2(\pi k/m)$,
 the finite Dirichlet kernel 
\[D_n(\ell)=\sum_{k=-n}^n e^{2\pi i k\ell/m}=\frac{\sin (2\pi (n+1/2)\ell/m) }{\sin( \pi \ell/m)}\]
 lies in 
${\rm PW}_{2}(\mathcal{C}_m)$ provided that $n\leq \lfloor m/4\rfloor$.
The $\ell^2(\mathcal{C}_m)$-norm of $D_n$ is $m(2n+1)$, so $\bar{D}_n=D_n/\sqrt{m(2n+1)}$ has unit norm and $\bar{D}_n(0)=\sqrt{(2n+1)/m}$.
When $n=m'=(m-1)/4$, one has $\bar{D}_n\in {\rm PW}_2(\mathcal{C}_m)$ and 
$\bar{D}_n(0)=\sqrt{1/2+1/(2m)}$.
If $f(u,\ell)=\psi(u) \bar{D}_{m'}(\ell)$ is in the $(2K-2)$-eigenspace of the $\mathcal{B}_N$-Laplacian on  
$\mathcal{B}_N\square \{0\}$ then
$f\in {\rm PW}_{2K}$ and 
$PQ (\bar{D}_{m'} \psi)=P_{2,\mathcal{C}_m}(\delta_{0,\ell}\bar{D}_{m'}(\ell) )P_{2K-2,\mathcal{B}_N} \psi(u)=\bar{D}_{m'}(0)\bar{D}_{m'}(\ell) \psi(u)=\bar{D}_{m'}(0) f $ 
where $\bar{D}_{m'}(0)=\sqrt{\frac{1}{2}+\frac{1}{2m}}$ and  $P_{\lambda,\mathcal{G}}$ denotes the projection of $\ell^2(\mathcal{G})$ onto ${\rm PW}_\lambda(\mathcal{G})$.  
The space of such $f$ has dimension $\binom{N}{K-1}$.  
The  subspace of $\ell^2(\mathcal{C}_m)$ spanned by the cyclic shifts $D_n(\cdot-\ell)$
has dimension $2n+1$ when $n\leq \lfloor (m-1)/2\rfloor $. To see this,
observe that the Fourier transform
of $D_n(\cdot-k)$ is $e^{-2\pi i \xi k/m} \widehat{D}_n(\xi) 
= e^{-2\pi i \xi k/m}\mathbbm{1}_{|\xi {\rm mod}\, m|\leq n}(\xi)$.  Since $2n+1\leq m$, the span of the truncated exponentials
$\{e^{-2\pi i \xi k/m}\mathbbm{1}_{|\xi  {\rm mod}\ m|\leq n}(\xi)\}_k$ has dimension at most $2n+1$ and so the same holds
for the shifts $D_n(\cdot-k)$. 
In the case $n=m'=(m-1)/4$, 
$2m'+1 
=\lfloor (m+1)/2\rfloor$. 
 The space generated by the cyclic shifts  $\{D_{m'}(\cdot-\ell) \psi(u):\, \psi\in\Lambda_{2(K-1)}(\mathcal{B}_N)\}$ then
has dimension $\lfloor (m+1)/2\rfloor\, \binom{N}{K-1}$.

Finally, if the restriction of $f$ to $\mathcal{B}_N\square \{0\}$ 
 is in the $2K$-eigenspace $\Lambda_{2K}$ of the $\mathcal{B}_N$-Laplacian then the only way in which
 $f$ itself can lie in ${\rm PW}_{2K}$ is if $f$ is the tensor product of its zero slice with the constant function equal to one on 
 $\mathcal{C}_m$:  $f(u,\ell)=c \mathbbm{1}_{\mathcal{C}_m} (\ell) \psi(u)$.
 The space of such functions has dimension $\binom{N}{K}$. 
 In this case, $PQ f=P(\delta_{0,\ell}f)=c P_{0,\mathcal{C}_m}(\delta_{0})P_{2K,\mathcal{B}_N} \psi=c\frac{1}{\sqrt{m}}\mathbbm{1}_{\mathcal{C}_m} (\ell) \psi(u)=\frac{1}{\sqrt{m}} f$.

 According to (\ref{eq:pw2k_decomp}), combining  bases for ${\rm PW}_{2(K-2)}(\mathcal{B}_N)\otimes\ell^2(\mathcal{C}_m)$, 
 for $\Lambda_{2K-2}(\mathcal{B}_N)\otimes {\rm PW}_{2}(\mathcal{C}_m)$, and  for 
  $\Lambda_{2K}(\mathcal{B}_N)\otimes {\rm PW}_{0}(\mathcal{C}_m)$,
produces
 a basis for ${\rm PW}_{2K}$. 
Its dimension is thus $m (\sum_{k=0}^{K-2} )+\lfloor (m+1)/2\rfloor \binom{N}{K-1}+\binom{N}{K}$.
 We collect the observations above. 
 
 \begin{theorem}\label{thm:bnsquarecm}
 For $0<K<N$ the space ${\rm PW}_{2K}( \mathcal{B}_N\square \,\mathcal{C}_m) $ is the orthogonal direct sum of
 the spaces (i) ${\rm PW}_{2K-4}( \mathcal{B}_N)\otimes \ell^2(\mathcal{C}_m) $, (ii) 
$\Lambda_{2K-2}( \mathcal{B}_N)\otimes {\rm PW}_2(\mathcal{C}_m)$, and (iii)
 $\Lambda_{2K}( \mathcal{B}_N)\otimes \Lambda_0(\mathcal{C}_m) $. 
  One has the following:
 \begin{enumerate}
 \item The space ${\rm PW}_{2K-4}( \mathcal{B}_N)\otimes \ell^2(\mathcal{C}_m) $ has dimension $m\sum_{\kappa=0}^{K-2}\binom{N}{\kappa}$.  An orthonormal basis for this space consists of cyclic shifts of the vectors $\delta_{0} (\ell) \psi_{\kappa,\nu}(u)$ where $\{\psi_{\kappa,\nu}\}_\nu$ is an ONB for $\Lambda_{2\kappa}(\mathcal{B}_N)$. On the zero-$\mathcal{B}_N$ slice, for any $f$
 in this space one has
 \[ \|Qf\|^2=\sum_{\kappa=0}^{K-2}\sum_{\nu=1}^{{\rm dim}\,\Lambda_\kappa} |\langle f,\, \psi_{\kappa,\nu}\rangle|^2\, .
 \]
 \item The space $\Lambda_{2K-2}( \mathcal{B}_N)\otimes {\rm PW}_2(\mathcal{C}_m)$ has dimension $\lfloor (m+1)/2\rfloor \binom{N}{K-1}$ when $m$ has the form $m=4m'+1$.  A basis of this space consists of the cyclic shifts 
 $\bar{D}_{m'}(\cdot-2\ell)\, \psi_{K-1,\nu}$, $\ell=0,\dots, (m-1)/2$, $\nu=1,\dots, {\rm dim}\, \Lambda_{2(K-1)}$.
On the zero-$\mathcal{B}_N$ slice, for any $f$
 in this space one has
 \[ \Bigl(\frac{1}{2}+\frac{1}{2m}\Bigr)\|Q f\|^2=
 \sum_{\nu=1}^{{\rm dim}\, \Lambda_{2K-2}} |\langle f,\, \psi_{K-1,\nu}\rangle|^2\, .
 \]
 \item 
 The space $\Lambda_{2K}( \mathcal{B}_N)\otimes {\rm PW}_0(\mathcal{C}_m)$ has dimension $\binom{N}{K}$.  
 An orthonormal basis  of this space consists of
 $\frac{1}{\sqrt{m}}\mathbbm{1}_{\mathcal{C}_m}(\ell) \psi_{K,\nu}(u)$, $\nu=1,\dots, \binom{N}{K}$ where $\{\psi_{K,\nu}\}$ is
 an ONB for $\Lambda_{2K}(\mathcal{B}_N)$.
On the zero-$\mathcal{B}_N$ slice, for any $f$
 in this space one has
 \[ \frac{1}{m}\|Q f\|^2= \sum_{\nu=1}^{{\rm dim}\, \Lambda_{2K}} |\langle  f,\, \psi_{K,\nu}\rangle|^2\, .
\]
 \end{enumerate}
 \end{theorem}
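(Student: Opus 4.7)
First I would establish the orthogonal decomposition (\ref{eq:pw2k_decomp}) using the tensor structure $L(\mathcal{B}_N\square\mathcal{C}_m) = L(\mathcal{B}_N)\otimes I + I\otimes L(\mathcal{C}_m)$, whose eigenvectors are tensor products $\psi\otimes\varphi$ with eigenvalue $\lambda+\mu$. Since $\|L(\mathcal{C}_m)\|\leq 4$, the constraint $\lambda+\mu\leq 2K$ partitions exhaustively and exclusively according to the cube eigenvalue $\lambda$: when $\lambda\leq 2(K-2)$ any $\mu\in[0,4]$ is admissible, coupling the full $\ell^2(\mathcal{C}_m)$; when $\lambda=2(K-1)$ one must have $\mu\leq 2$, coupling ${\rm PW}_2(\mathcal{C}_m)$; and when $\lambda=2K$ only $\mu=0$ works, coupling $\Lambda_0(\mathcal{C}_m)$. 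Orthogonality of the three resulting pieces follows from orthogonality of the disjoint $\mathcal{B}_N$-eigenspaces that appear on the cube side.

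Next I would verify the claimed orthonormal bases and dimensions piece by piece. Pieces (i) and (iii) are routine: in (i) I tensor the cube ONB $\{\psi_{\kappa,\nu}\}$ of ${\rm PW}_{2K-4}(\mathcal{B}_N)$ with the cyclic-delta ONB of $\ell^2(\mathcal{C}_m)$, yielding $m\cdot\sum_{\kappa=0}^{K-2}\binom{N}{\kappa}$ basis elements; in (iii) the cycle factor $\Lambda_0(\mathcal{C}_m)$ is one-dimensional (spanned by $m^{-1/2}\mathbbm{1}$) so tensoring with an ONB of $\Lambda_{2K}(\mathcal{B}_N)$ produces $\binom{N}{K}$ elements. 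The interesting case is (ii). The paper's prior observations give $\dim {\rm PW}_2(\mathcal{C}_m)=\lfloor(m+1)/2\rfloor=2m'+1$ when $m=4m'+1$. To prove that the even-integer shifts $\bar{D}_{m'}(\cdot-2\ell)$, $\ell=0,\dots,2m'$, form a basis of ${\rm PW}_2(\mathcal{C}_m)$, I pass to the Fourier side: each such shift has transform $e^{-4\pi i\xi\ell/m}\mathbbm{1}_{|\xi|\leq m'}(\xi)$, so the question reduces to nonsingularity of the matrix $\bigl(e^{-4\pi i\xi\ell/m}\bigr)_{|\xi|\leq m',\,0\leq\ell\leq 2m'}$. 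This is Vandermonde in the $2m'+1$ points $e^{-4\pi i\xi/m}$, and since $\gcd(2,m)=1$ for odd $m=4m'+1$ these points are distinct $m$th roots of unity, so the matrix is invertible. Combined with the $\binom{N}{K-1}$ cube eigenvectors in $\Lambda_{2(K-1)}(\mathcal{B}_N)$, this yields the stated basis of dimension $\lfloor(m+1)/2\rfloor\binom{N}{K-1}$, and adding the three dimensions matches the direct eigenvalue count for $\dim{\rm PW}_{2K}(\mathcal{B}_N\square\mathcal{C}_m)$, confirming that the union of the three bases spans all of ${\rm PW}_{2K}$.

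Finally, the zero-slice Parseval identities follow by direct computation: in each case the basis structure makes $Qf$ a product of a scalar (the cycle factor evaluated at $\ell=0$) with a linear combination of the cube eigenvectors $\psi_{\cdot,\nu}$, so orthonormality of the $\psi_{\cdot,\nu}$ converts $\|Qf\|^2$ into a sum of squared cube-side inner products. The normalization constants arise from the cycle factor at $\ell=0$: $\delta_\ell(0)=1$ in (i) leaves no prefactor; $\bar{D}_{m'}(0)^2=(2m'+1)/m=1/2+1/(2m)$ in (ii); and $|m^{-1/2}\mathbbm{1}(0)|^2=1/m$ in (iii). The main obstacle I anticipate is the basis claim for piece (ii)---showing that the specific even-integer cyclic shifts of the Dirichlet kernel span ${\rm PW}_2(\mathcal{C}_m)$---for which the Vandermonde argument together with the hypothesis $m=4m'+1$ proves clean, but without this parity condition one would need to choose a different set of shifts. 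Once the basis is in hand the remaining dimension counts and norm identities reduce to Parseval on the cube factor and proceed routinely.
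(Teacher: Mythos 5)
Your proposal is correct and follows essentially the same route as the paper: the decomposition via additivity of Cartesian-product eigenvalues with $\|L(\mathcal{C}_m)\|\le 4$, the slice-by-slice dimension counts, the Dirichlet-kernel analysis of ${\rm PW}_2(\mathcal{C}_m)$, and the identification of the constants $1$, $\bar D_{m'}(0)^2=1/2+1/(2m)$, and $1/m$ as the squared value of the cycle factor at $\ell=0$. Your Vandermonde argument for the linear independence of the even shifts $\bar D_{m'}(\cdot-2\ell)$ makes explicit a step the paper only asserts (it proves the upper bound $2m'+1$ on the span of all shifts and then claims the stated shifts form a basis), so it is a welcome sharpening rather than a departure.
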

In contrast to the reconstruction functions $p_\nu$ in (\ref{eq:gensamp}), the functions $Q\psi_{k,\nu}$ are not themselves in ${\rm PW}_{2K}$
when $k=K-1$ or $k=K$. However, the $Q\psi_{k,\nu}$ are mutually orthogonal on $\mathcal{B}_N\,\square\,\{0\}$ and have unique extensions 
$\psi_{k,\nu}\in {\rm PW}_{2K}$.

In comparison with Thm.~\ref{thm:pesenson2}, Thm.~\ref{thm:bnsquarecm} identifies three separate regimes 
(localized on a slice, concentrated on a slice, and equally distributed over all slices)--in a very specific case, in which
a graph $\mathcal{G}$ might satisfy a Plancherel--Polya type inequality on each cluster, in each regime, but the 
corresponding constants ($1$, $1/2+1/(2m)$ and $1/m$ respectively  reflect the nature of the regime.

\subsection{Specific case $N =7$, $K =3$, $m=21$ of $\mathcal{B}_N\square \,\mathcal{C}_m$ \label{ssec:N7m21_cartesian}}
 
As we did in the case of $\mathcal{B}_N\vdash \mathcal{C}_m$, we fix $N=7$ and $K=3$. The dimension of ${\rm PW}_6(\mathcal{B}_3\square\,\, \mathcal{C}_{21})$ is 
\[21\biggl(\binom{3}{0}+\binom{3}{1}\biggr)+\lfloor (21+1)/2\rfloor \binom{7}{2}+\binom{7}{3}
=21\cdot 8+11\cdot 21 +35 =434 .\]
The corresponding Laplacian eigenvalues that are at most six are shown in Fig.~\ref{fig:sc21_evals_b7c21_1to620}.
The 64 nonzero eigenvalues of $PQ$ are plotted in Fig.~\ref{fig:spqc21_evals_b7c21_1to80}.
 
 Consider, on the other hand, the eigenvalues of $PQ$ as described above.  In the case $N=7, K=3$ and $m=21$ there are 
 8 eigenvalues of $PQ$ equal to one, 21 equal to $1/2+1/42$, and 35 equal to $1/21$. The remaining eigenvalues are equal to zero.

     \begin{figure}[tbhp]
  \footnotesize{
\centering 
\includegraphics[width=\textwidth,height=2.5in]{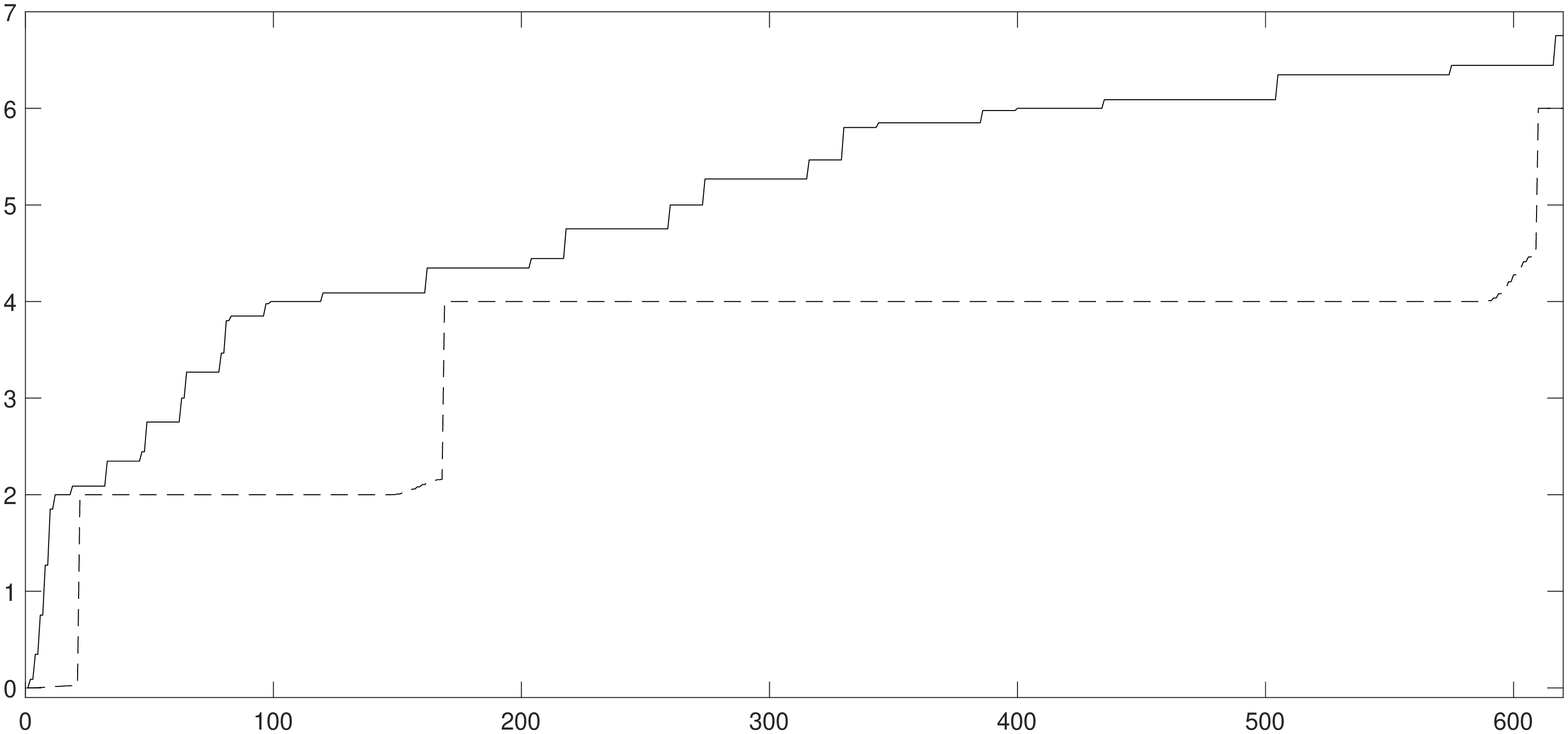}
\caption{\label{fig:sc21_evals_b7c21_1to620}
First 620 eigenvalues of $L(\mathcal{B}_7\square \mathcal{C}_{21})$. Eigenvalues of $L(\mathcal{B}_7\vdash \mathcal{C}_{21})$
are plotted (dashed) for comparison
}
}
\end{figure}

    \begin{figure}[tbhp]
  \footnotesize{
\centering 
\includegraphics[width=\textwidth,height=2.5in]{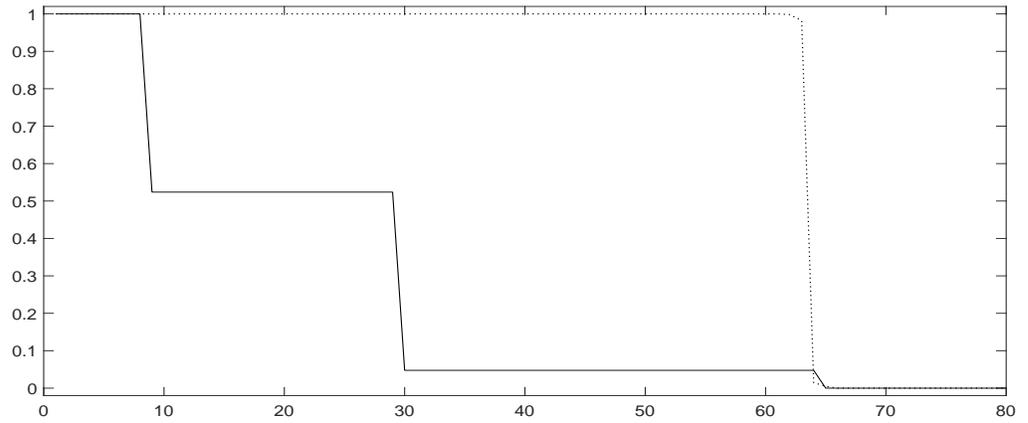}
\caption{\label{fig:spqc21_evals_b7c21_1to80}
Eigenvalues of $PQ$ on $\mathcal{B}_7\square \mathcal{C}_{21}$. 
Corresponding eigenvalues of $PQ$ on $\mathcal{B}_7\vdash \mathcal{C}_{21}$ are plotted (dashed) for comparison
}
}
\end{figure}

    \begin{figure}[tbhp]
  \footnotesize{
\centering 
\includegraphics[width=\textwidth,height=2.5in]{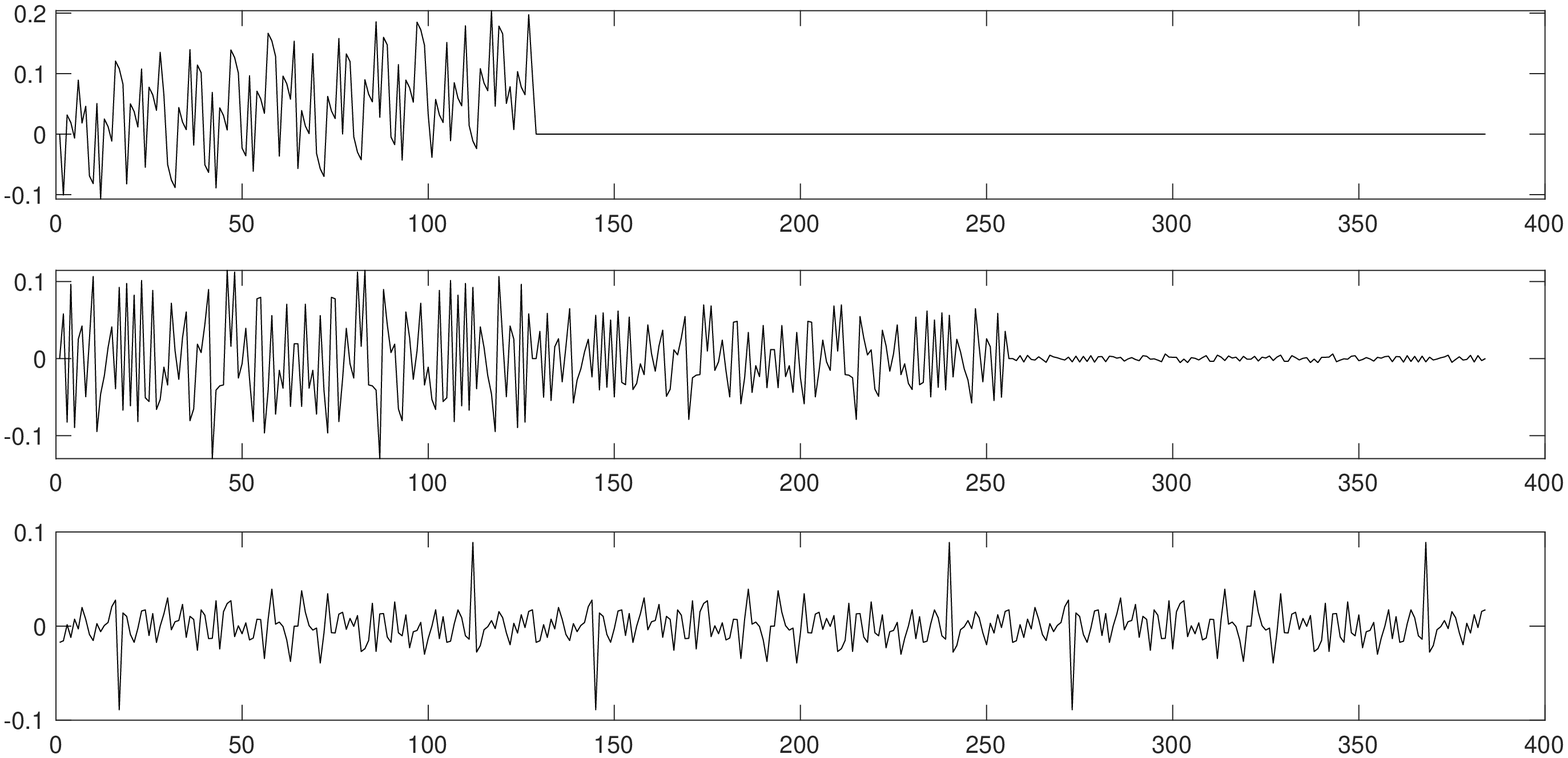}
\caption{\label{fig:pqc21_evecs1_11_32_b7c21_1to384}
Eigenvectors of $PQ$ on $\mathcal{B}_7\square \mathcal{C}_{21}$ corresponding to eigenvalue indices 1 (top), 11 and 30 (bottom)
whose respective eigenvalues are $1$, $1/2+1/42$ and $1/21$. The first three block lengths ($3*128$) are plotted. The bottom eigenvector is periodic of period 128.
}
}
\end{figure}

\section{Discussion and conclusions\label{sec:conclusion}}
We have considered two simple families of graphs, $\mathcal{B}_N\vdash \mathcal{C}_m$ and $\mathcal{B}_N\,\square\, \mathcal{C}_m$ 
in which cubes $\mathcal{B}_N$ are thought of as clusters connected by a cycle.  A very rough measure
of \emph{clusterness} of a subgraph determined by a subset of vertices, most of whose neighbors are in the subset, is the ratio of the average number of neighbors 
within the cluster to the average number of neighbors in the whole graph. In the case of $\mathcal{B}_N\square \,\mathcal{C}_m$
this ratio is $N/(N+2)$ whereas in the case of $\mathcal{B}_N\vdash\mathcal{C}_m$ the ratio is 
$(N2^N/(N2^N+2)=N/(N+2^{1-N})$, which is much closer to one.  

In the case of $\mathcal{B}_N\vdash\mathcal{C}_m$,  a majority of Laplacian eigenvectors  are completely supported
in a cluster. This allows for effective sampling inequalities in Paley--Wiener spaces in which samples are inner products
with eigenvectors of spatio--spectral limiting $PQ$-operators.

In the case of  $\mathcal{B}_N\square \,\mathcal{C}_m$ there are three regimes: one in which eigenvectors of $PQ$ are completely
localized in clusters, a second in which more than half of the energy of eigenvectors of $PQ$ is concentrated in a cluster, and one in which
the corresponding eigenvectors of $PQ$ are equally spread over the full graph.  (One could avoid this third regime by limiting the spectrum
to Laplacian eigenvalues strictly smaller than $2K$ for some $K$.) The measurement vectors in Thm.~\ref{thm:bnsquarecm}, which have the form
$Q\psi$ for eigenvectors of $PQ$, thus are not all themselves in the Paley--Wiener space as they are in \cite{pesenson_2021},
 but as localizations of eigenvectors of $PQ$, they are mutually orthogonal on the cluster.

We acknowledge that $\mathcal{B}_N\vdash\mathcal{C}_m$ and $\mathcal{B}_N\square \,\mathcal{C}_m$ are far from being graphs that occur
in real networks. However, the three regimes governing concentration of elements of Paley--Wiener spaces on clusters 
in the example of  $\mathcal{B}_N\square \,\mathcal{C}_m$ should be a feature in
graphs whose clusters have induced Laplacian eigenvalues that are separated in magnitude on the order of the norm of the \emph{skeleton} graph 
in which each cluster is compressed down to a single vertex.

%
%
%
%
%


 
 \bigskip\section*{Appendix: $PQ$ on a finite abelian group and spectral accumulation \label{sec:finite_abelian}}
 \addcontentsline{toc}{section}{Appendix}
 In Sect.~\ref{ssec:gensamp} it was observed that  generalized sampling expansions involve
 expansion of the exponential Fourier kernel $e^{2\pi i t\xi}$ in terms of inversion of a matrix with entries indexed
 by Fourier transforms of sampling convolvers (\ref{eq:exp_gensamp}).  The particular case when these convolvers
 are eigenfunctions of time and band limiting that we outlined
 in Sect.~\ref{ssec:tbl}  has an analogous version in the more general setting of locally compact abelian groups, and we 
 outline the case of finite abelian groups here.
 
Let $G$ be a finite abelian group.  By the fundamental theorem of finite abelian groups,
 $G$ is a product $\prod_{\nu=1}^N \mathbb{Z}_{m_\nu}$ of cycles of lengths that divide the order $|G|$ of $G$.  A Fourier basis consists of tensor products of
 normalized exponential vectors $\{e^{2\pi i \ell k_\nu/m_\nu}/\sqrt{m_\nu} \}_{\ell=0}^{m_\nu-1}$ indexed by $k_\nu=0,\dots, m_\nu-1$. The Fourier transform elements form a dual group $\widehat{G}$ under componentwise multiplication.  $\widehat{G}$ and $G$ are isomorphic .
  We make use of these facts to establish a \emph{spectral accumulation property} of spatio--spectral localization operators on $G$ 
  (see \cite{hogan_lakey_book} for the case on $\mathbb{R}$, which is a well-known).
  In what follow we assume indexings of the vertex elements $s\in G$ and  $\sigma\in \widehat{G}$.  We denote by 
  $F(s,\sigma)$ the matrix of the Fourier transform of $G$ with respect to these indexings such that the columns (fixed $\sigma$)
  are pairwise orthonormal: $FF^\ast=I_{|G|}$.  Since the entries are products of normalized exponentials,  
  $|F(s,\sigma)|^2=1/|G|$.
  
Suppose that $S$ and $\Sigma$ are symmetric subsets of $G, \widehat{G}$ (e.g., $S=-S$).  Let $P$ be the orthogonal projection onto
the span of the vectors $\{F(\cdot,\sigma):\,\sigma\in\Sigma\}$ and let $Q$ be pointwise multiplication by the characteristic
function of $S$. Then $PQ$ is self-adjoint and has rank equal to $|\Sigma|$ if $|S|\geq |\Sigma|$. 
Let $\mu_1\geq \mu_2\geq \cdots\geq \mu_{|\Sigma|}$ 
be an ordering of the nonzero eigenvalues  $PQ$ with corresponding eigenfunctions $\varphi_\nu$ so that (by Mercer's theorem) $K(s,t)=\sum_{\nu=1}^{|\Sigma|} 
\varphi_\nu(s) \overline{\varphi_\nu(t)}$ is the kernel of $PQ$. For each $\sigma\in \Sigma$
one has 
\[F(s,\sigma)=\sum_\nu  \langle F(\cdot,\sigma),\,\varphi_\nu\rangle \, \varphi_\nu(s)\, 
=\sum_\nu \frac{1}{\mu_\nu} \langle F(\cdot,\sigma),\, Q \varphi_\nu\rangle \, \varphi_\nu(s)\, 
\]
since $PQ\varphi_\nu=\mu_\nu \varphi_\nu$ and $F(\cdot,\sigma)  PQ\varphi_\nu=F(\cdot,\sigma)  Q\varphi_\nu$ for $\sigma\in\Sigma$.

Therefore,
\begin{eqnarray*}\sum \mu_\nu |F\varphi_\nu(\sigma)|^2 &=& \sum \frac{1}{\mu_\nu}|F(PQ\varphi_\nu)(\sigma)|^2
=\sum \frac{1}{\mu_\nu} \mathbbm{1}_\Sigma|F(Q\varphi_\nu)(\sigma)|^2\\
&=&\sum \frac{1}{\mu_\nu} \mathbbm{1}_\Sigma F(Q\varphi_\nu)(\sigma) \overline{F(Q\varphi_\nu)(\sigma)}\\
&=&\sum \frac{1}{\mu_\nu} \mathbbm{1}_\Sigma \langle F(\cdot, \sigma), (Q\varphi_\nu)(\cdot)
\rangle  \overline{\langle F(\cdot, \sigma), (Q\varphi_\nu)(\cdot)
\rangle}\\
&=&\mathbbm{1}_\Sigma \bigl\langle  \sum_\nu \frac{1}{\mu_\nu} \langle F(\cdot,\sigma), Q\varphi_\nu)\rangle \,\varphi_\nu,\, Q F(\cdot,\sigma) \bigr\rangle  \\
&=&\mathbbm{1}_\Sigma \bigl\langle  F(\cdot,\sigma), \,  \mathbbm{1}_S F(\cdot,\sigma) \bigr\rangle  
=\mathbbm{1}_\Sigma(\sigma)   \sum_{s\in S} F(s,\sigma) \overline{F(s,\sigma)}\\
& &=\frac{1}{|G|}\mathbbm{1}_\Sigma(\sigma) \sum \mathbbm{1}_S =\frac{1}{|G|} \mathbbm{1}_\Sigma(\sigma) |S|\\
\end{eqnarray*}
using that $Q$ is self adjoint. The group structure played a critical role in that the kernel of the Fourier matrix is made of elements
of modulus equal to one.

We summarize the calculation above in the following.
\begin{proposition}\label{prop:finite_abelian} Let $G$ be a finite abelian group with dual $\widehat{G}$ and let $S$ and $\Sigma$ be symmetric subsets of $G$ and $\widehat{G}$ respectively with $|S|\geq |\Sigma|$. Denote by $P$ the orthogonal projection onto the span of the Fourier vectors indexed by $\Sigma$ and let $(Qf)(s)=\mathbbm{1}_S(s) f(s)$ such that $\mu_1\geq\cdots\geq \mu_{|\Sigma|}$
are the eigenvalues of $PQ$ with eigenvectors $\varphi_1,\dots,\varphi_{|\Sigma|}$.
\[\sum \mu_\nu |F\varphi_\nu(\sigma)|^2=\frac{1}{|G|} \mathbbm{1}_\Sigma(\sigma) |S|
\]
\end{proposition}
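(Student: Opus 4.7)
The plan is to follow the computation sketched just before the proposition: expand the Fourier kernel $F(\cdot,\sigma)$ in the eigenbasis $\{\varphi_\nu\}$ of $PQ$, substitute into the sum $\sum_\nu \mu_\nu |F\varphi_\nu(\sigma)|^2$, and exploit the fact that the entries of $F$ have uniform modulus $1/\sqrt{|G|}$.

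First I would dispose of the case $\sigma \notin \Sigma$. Since $PQ$ is self-adjoint and $\mathrm{rank}(PQ) \le |\Sigma|$, each eigenvector $\varphi_\nu$ with $\mu_\nu \ne 0$ lies in the range of $P$, because $\mu_\nu \varphi_\nu = P(Q\varphi_\nu)$. Thus $P\varphi_\nu = \varphi_\nu$, so $F\varphi_\nu$ is supported in $\Sigma$ and both sides of the claimed identity vanish for $\sigma \notin \Sigma$.

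For $\sigma \in \Sigma$ I would use that $F(\cdot,\sigma)$ is itself in $\mathrm{range}(P)$, hence has an orthogonal expansion in $\{\varphi_\nu\}$. The crucial step is the identity
\[
\langle F(\cdot,\sigma),\varphi_\nu\rangle = \frac{1}{\mu_\nu}\langle F(\cdot,\sigma),Q\varphi_\nu\rangle,
\]
obtained by writing $\mu_\nu \varphi_\nu = PQ\varphi_\nu$ and moving $P$ across the inner product using $PF(\cdot,\sigma)=F(\cdot,\sigma)$. Substituting this into $\sum_\nu \mu_\nu |F\varphi_\nu(\sigma)|^2$ and re-collecting terms via the spectral resolution $\sum_\nu \varphi_\nu \overline{\varphi_\nu(\cdot)}$ acting on the range of $P$, the double sum telescopes to $\langle F(\cdot,\sigma), Q F(\cdot,\sigma)\rangle$; here self-adjointness of $Q$ is used to redistribute the characteristic function onto the right factor.

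The final step is where the group structure enters decisively: because $F$ is a unitary matrix whose entries are products of normalized roots of unity, $|F(s,\sigma)|^2 = 1/|G|$ for every pair $(s,\sigma)$. Consequently
\[
\langle F(\cdot,\sigma), Q F(\cdot,\sigma)\rangle = \sum_{s \in S} |F(s,\sigma)|^2 = \frac{|S|}{|G|},
\]
which, combined with the indicator $\mathbbm{1}_\Sigma(\sigma)$ from the first step, yields the proposition. The main obstacle is book-keeping in the telescoping step, namely verifying that the $\nu$-sum really does reassemble into the inner product $\langle F(\cdot,\sigma), Q F(\cdot,\sigma)\rangle$; this is essentially a Parseval-type reconstruction inside $\mathrm{range}(P)$. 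The analytically substantive ingredient, the uniform modulus of $F$, isolates precisely the feature of Cayley graphs of finite abelian groups that makes the spectral-accumulation property hold.
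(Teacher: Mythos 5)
Your proposal is correct and follows essentially the same route as the paper: expanding $F(\cdot,\sigma)$ in the $PQ$-eigenbasis via the identity $\langle F(\cdot,\sigma),\varphi_\nu\rangle=\mu_\nu^{-1}\langle F(\cdot,\sigma),Q\varphi_\nu\rangle$, reassembling the sum into $\langle F(\cdot,\sigma),QF(\cdot,\sigma)\rangle$ using self-adjointness of $Q$, and finishing with the uniform modulus $|F(s,\sigma)|^2=1/|G|$. Your explicit treatment of the case $\sigma\notin\Sigma$ is a small clarification of the paper's $\mathbbm{1}_\Sigma$ bookkeeping, but the argument is the same.
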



 \bibliographystyle{amsplain}
\bibliography{higgins_refs.bib}

\end{document}